\documentclass[reqno,11pt]{amsart}
\usepackage{amsmath}
\usepackage{amssymb}
\usepackage{tabularx}
\usepackage{enumerate}
\usepackage[dvips]{psfrag,graphicx}

\topmargin=-0.7in \hoffset=-16.5mm \voffset=2cm \textheight=220mm
\textwidth=160mm

\usepackage{mathrsfs}

\usepackage{amsfonts,amssymb,amsmath}
\usepackage{epsfig}

%\usepackage[notref]{showkeys} 
%\usepackage{showkeys}

%%%%%%%%%%%%%%%%%
%%%%%%%%%%%%%%%%
\makeatletter
\@addtoreset{equation}{section}
  
\makeatother
%%%%%%%
\newtheorem{thm}{Theorem}[section]
\newtheorem{lem}[thm]{Lemma}

\newtheorem{rem}[thm]{Remark}
%\newtheorem{defn}[thm]{Definition}
%\newcommand{\R}{\mathbb{R}}
%\newcommand{\ve}{\varepsilon}
%\newcommand{\wt}{\widetilde} 
%\newcommand{\wh}{\widehat}

%%%%%%%%%
\begin{document}
\title[Full cross-diffusion limit in the stationary SKT model]
{Full cross-diffusion limit 
in the stationary Shigesada-Kawasaki-Teramoto model}
 \thanks{This research was
partially supported by JSPS KAKENHI Grand Number 19K03581.}
\author[K. Kuto]{Kousuke Kuto$^\dag$}
\thanks{$\dag$ Department of Applied Mathematics, 
Waseda University, 
3-4-1 Ohkubo, Shinjuku-ku, Tokyo 169-8555, Japan.}
\thanks{{\bf E-mail:} \texttt{kuto@waseda.jp}}
\date{\today}

\begin{abstract} 
This paper studies the asymptotic behavior of  
coexistence steady states of the Shigesada-Kawasaki-Teramoto model 
as both cross-diffusion coefficients tend to infinity at the same rate.
In the case when either one of two cross-diffusion coefficients
tends to infinity,
Lou and Ni \cite{LN2} derived a couple of limiting systems, 
which characterize the asymptotic behavior of 
coexistence steady states. 
Recently, a formal observation by
Kan-on \cite{Ka3} implied the existence of a limiting system 
including the nonstationary problem as both
cross-diffusion coefficients tend to infinity at the same rate.
This paper gives a rigorous proof of 
his observation as far as the stationary problem.
As a key ingredient of the proof,
we establish a uniform $L^{\infty}$ estimate
%which is independent of cross-diffusion coefficients,
for all steady states.
Thanks to this a priori estimate,
we show that the asymptotic profile of coexistence steady states
can be characterized by a solution of either of two limiting systems.
\end{abstract}

\subjclass[2010]{35B45, 35B50, 35B32, 35J57, 92D25}
\keywords{cross-diffusion,
competition model,
nonlinear elliptic system,
a priori estimate, 
maximum principle,
limiting system,
bifurcation} \maketitle

\section{Introduction}
This paper is concerned with the following Neumann problem of
nonlinear elliptic equations:
\begin{equation}\label{SKT}
\begin{cases}
\Delta [\,(d_{1}+\alpha v)u\,]+
%u(a_{1}-b_{1}u-c_{1}v)
f(u,v)=0
\ \ &\mbox{in}\ \Omega,\\
\Delta [\,(d_{2}+\beta u)v\,]+
%v(a_{2}-b_{2}u-c_{2}v)
g(u,v)=0
\ \ &\mbox{in}\ \Omega,\\
u\ge 0,\ \ v\ge 0
\ \ &\mbox{in}\ \Omega,\\
\partial_{\nu}u=\partial_{\nu}v=0
\ \ &\mbox{on}\ \partial\Omega,
\end{cases}
\end{equation}
where
\begin{equation}\label{fgdef}
f(u,v):=u(a_{1}-b_{1}u-c_{1}v),\qquad
g(u,v):=v(a_{2}-b_{2}u-c_{2}v).
\end{equation}
Here $\Omega$
is a bounded domain in $\mathbb{R}^{N}$ 
with smooth boundary $\partial\Omega $;
$\Delta :=\sum^{N}_{j=1}\partial^{2}/\partial x_{j}^{2}$
is the usual Laplace operator;
$\nu (x)$
is the outer unit normal vector
at $x\in\partial\Omega $, and
$
\partial_{\nu }u=
\nu (x)\cdot\nabla u
$
represents the out-flux of $u$;
coefficients
$a_{i}$,
$b_{i}$,
$c_{i}$ and
$d_{i}$
$(i=1,2)$
are positive constants;
$\alpha$
and
$\beta$
are nonnegative constants.
System \eqref{SKT}
is the stationary problem
of a Lotka-Volterra competition model in which 
unknown functions $u(x)$ and $v(x)$ represent the stationary population densities
of two competing species in the habitat $\Omega$.
In the reaction terms,
$a_{i}$ represent the birth rates of the respective species,
$b_{1}$ and $c_{2}$ denote the intra-specific competition coefficients, and
$c_{1}$ and $b_{2}$ denote the inter-specific competition coefficients.
In the diffusion terms,
$d_{1}\Delta u$ and $d_{2}\Delta v$ represent 
the linear diffusion determined by
the dispersive force associated 
with random movement of each species,
whereas
$\alpha\Delta (uv)$ and $\beta\Delta (uv)$ denote
the nonlinear diffusion caused by  
the population pressure resulting from interference
between different species.
The interaction term $\Delta (uv)$ is often referred to as the
{\it cross-diffusion}.
See a book by Okubo and Levin \cite{OL} for 
modellings of the biological diffusion.
Such a Lotka-Volterra competition system with cross-diffusion 
(and additional) terms was proposed by Shigesada, Kawasaki and Teramoto
\cite{SKT}.
Beyond their bio-mathematical aim to realize 
{\it segregation phenomena} of two competing species observed in 
ecosystems, a lot of pure mathematicians have studied a class of  
Lotka-Volterra systems with cross-diffusion as a prototype of 
diffusive interactions.
Today, such a class of Lotka-Volterra system with cross-diffusion 
is referred as {\it the SKT model} celebrating the authors of \cite{SKT}.
See e.g., the book chapters by J\"ungel \cite{Ju}, Ni \cite{Ni},
and Yamada \cite{Yam1, Yam2}
as surveys for mathematical works relating to the SKT model.

Since there are a lot of papers 
studying the stationary SKT model like \eqref{SKT}.
we just give a brief history of studies.
Immediately after the proposal by \cite{SKT},
the group of Mimura began to study \eqref{SKT}.
Their main methods in 1980s are 
the bifurcation (\cite{MK}) and
the singular perturbation (\cite{Mi, MNTT}), and moreover,
Kan-on \cite{Ka1} identified some criteria for
ensuring the stability/instability of
nonconstant solutions obtained by \cite{MNTT}.
After the middle of 1990s,
a couple of papers by Lou and Ni
(\cite{LN1, LN2}) advanced the understanding of \eqref{SKT} considerably.
By the combination of the energy method, the singular perturbation
and the degree theory,
the first paper
\cite{LN1} obtained some a priori estimates of solutions
and sufficient conditions for the existence/nonexistence of 
nonconstant solutions of \eqref{SKT} with some additional terms.
In the sequel \cite{LN2}, they studied the asymptotic behavior of
nonconstant solutions as $\alpha\to\infty$ (with fixed small $\beta\ge 0$).
For such a {\it cross-diffusion limit} procedure for \eqref{SKT},
there is a difficulty to derive the a priori $L^{\infty}$ estimate
of all solutions independently of $\alpha\ge 0$.
They established the a priori estimate \cite[Theorem 2.3]{LN2}
in case when $N\le 3$ and $\beta\ge 0$ is sufficiently small, and moreover,
found the following a couple of {\it limiting systems} (or shadow systems)
which characterize the asymptotic behavior of nonconstant solutions
as $\alpha\to\infty$:
\begin{thm}[\cite{LN2}]\label{LNthm}
Suppose that $N\le 3$,
$a_{1}/a_{2}\neq b_{1}/b_{2}$,
$a_{1}/a_{2}\neq c_{1}/c_{2}$
and 
$a_{2}/d_{2}$ is not equal to any eigenvalue of $-\Delta$
with homogeneous Neumann boundary condition on $\partial\Omega$.
Let $\{(u_{n}, v_{n})\}$ be any sequence of
positive nonconstant solutions of
\eqref{SKT} with $\alpha=\alpha_{n}\to\infty$.
Then there exists a small 
$\delta =\delta (a_{i}, b_{i}, c_{i}, d_{i})>0$ such that
if $\beta\le\delta$,
either of 
the following (i) or (ii) occurs;
\begin{enumerate}[(i)]
\item
there are a positive function $v\in C^{2}(\overline{\Omega })$
and a positive number $\tau $ such that
$(u_{n}, v_{n})$ converges uniformly to
$(\tau /v, v)$ 
by passing to a subsequence if necessary,
and $(v, \tau)$ satisfies
\begin{equation}\label{LNlim1}
\begin{cases}
d_{2}\Delta v+v(a_{2}-c_{2}v)-b_{2}\tau =0
\ \ &\mbox{in}\ \Omega,\\
%v> 0\ \ &\mbox{in}\ \Omega,\\
\partial_{\nu}v=0
\ \ &\mbox{on}\ \partial\Omega,\\
\displaystyle\int_{\Omega}
\dfrac{1}{v}\left(
a_{1}-\dfrac{b_{1}\tau }{v}-c_{1}v\right) =0;
\end{cases}
\end{equation}
\item
there are positive functions $u$,
$w\in C^{2}(\overline{\Omega })$ such that
$(u_{n}, \alpha_{n}v_{n})$ converges uniformly to $(u,w)$
by passing to a subsequence if necessary,
and $(u, w)$ satisfies
\begin{equation}\label{LNlim2}
\begin{cases}
\Delta[\,(d_{1}+w)u\,]+u(a_{1}-b_{1}u)=0
\ \ &\mbox{in}\ \Omega,\\
\Delta [\,(d_{2}+\beta u)w\,] +w(a_{2}-b_{2}u)=0
\ \ &\mbox{in}\ \Omega,\\
%u>0,\ \ w>0
%\ \ &\mbox{in}\ \Omega,\\
\partial_{\nu}u=\partial_{\nu}w=0
\ \ &\mbox{on}\ \partial\Omega.
\end{cases}
\end{equation}
\end{enumerate}
\end{thm}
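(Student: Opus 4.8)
The plan is to argue by compactness along the given sequence, passing to subsequences without further mention. The essential input is the uniform a priori bound \cite[Theorem~2.3]{LN2}: since $N\le 3$ and $\beta\le\delta$, there is a constant $C>0$, independent of $n$, with $0\le u_{n},v_{n}\le C$ on $\overline{\Omega}$. Set $w_{n}:=\alpha_{n}v_{n}$ and introduce the flux variables $\phi_{n}:=(d_{1}+w_{n})u_{n}$, $\psi_{n}:=(d_{2}+\beta u_{n})v_{n}$ and $\chi_{n}:=\alpha_{n}\psi_{n}=(d_{2}+\beta u_{n})w_{n}$, which by \eqref{SKT} satisfy $-\Delta\phi_{n}=f(u_{n},v_{n})$, $-\Delta\psi_{n}=g(u_{n},v_{n})$ and $-\Delta\chi_{n}=w_{n}(a_{2}-b_{2}u_{n})-c_{2}w_{n}^{2}/\alpha_{n}$, all with homogeneous Neumann data. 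By the a priori bound, the right-hand sides of the $\phi_{n}$- and $\psi_{n}$-equations are uniformly $L^{\infty}$-bounded, so, writing $c_{n}:=\frac{1}{|\Omega|}\int_{\Omega}\phi_{n}\,dx$, elliptic $L^{p}$ theory together with the Sobolev embedding shows that $\psi_{n}$ and $\phi_{n}-c_{n}$ are bounded in $C^{1,\gamma}(\overline{\Omega})$ for some $\gamma\in(0,1)$ (here $\psi_{n}$ is itself $L^{\infty}$-bounded, while only the average $c_{n}$ may diverge). In particular $\psi_{n}\to\psi$ in $C^{1}(\overline{\Omega})$. The dichotomy (i)/(ii) is decided by whether $\{w_{n}\}$ is bounded in $L^{\infty}(\Omega)$.

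\emph{Case~A: $\{w_{n}\}$ is bounded in $L^{\infty}(\Omega)$.} Then $v_{n}=w_{n}/\alpha_{n}\to 0$ uniformly, $\phi_{n}$ and $\chi_{n}$ are $L^{\infty}$-bounded, and since $w_{n}^{2}/\alpha_{n}$ is $L^{\infty}$-bounded the right-hand side of the $\chi_{n}$-equation is too; hence $\phi_{n}\to\phi$ and $\chi_{n}\to\chi$ in $C^{1}(\overline{\Omega})$. At each point $(u_{n},w_{n})$ is the unique nonnegative solution of $(d_{1}+w)u=\phi_{n}$, $(d_{2}+\beta u)w=\chi_{n}$: eliminating $u$ leaves $d_{2}w^{2}+(d_{1}d_{2}+\beta\phi_{n}-\chi_{n})w-d_{1}\chi_{n}=0$, whose discriminant is strictly positive and which has exactly one nonnegative root, depending smoothly on $(\phi_{n},\chi_{n})$; therefore $u_{n}\to u$ and $w_{n}\to w$ uniformly. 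Passing to the limit in the three equations above yields $-\Delta[(d_{1}+w)u]=u(a_{1}-b_{1}u)$ and $-\Delta[(d_{2}+\beta u)w]=w(a_{2}-b_{2}u)$ together with the Neumann conditions, i.e.\ \eqref{LNlim2}. The positivity $u,w>0$ is obtained by excluding the trivial limit $u\equiv w\equiv 0$ and the semitrivial constant limit: rescaling the steady-state equations (for instance by $\|u_{n}\|_{L^{\infty}}$) and applying Krein--Rutman and maximum-principle arguments, one finds that any such degenerate limit would force one of $a_{1}/a_{2}=b_{1}/b_{2}$, $a_{1}/a_{2}=c_{1}/c_{2}$, or that $a_{2}/d_{2}$ be a Neumann eigenvalue of $-\Delta$, each excluded by hypothesis.

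\emph{Case~B: $\|w_{n}\|_{L^{\infty}(\Omega)}\to\infty$.} Since $c_{n}/\alpha_{n}=\frac{d_{1}}{\alpha_{n}|\Omega|}\int_{\Omega}u_{n}\,dx+\frac{1}{|\Omega|}\int_{\Omega}u_{n}v_{n}\,dx$ is bounded, we may assume $c_{n}/\alpha_{n}\to\tau\ge 0$. From $\phi_{n}=d_{1}u_{n}+\alpha_{n}u_{n}v_{n}$ with $\phi_{n}-c_{n}$ and $d_{1}u_{n}$ bounded in $C^{0}(\overline{\Omega})$ we obtain $u_{n}v_{n}=c_{n}/\alpha_{n}+o(1)\to\tau$ uniformly. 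The genericity hypotheses exclude $\tau=0$ (which, exactly as in Case~A, would force a trivial or semitrivial limit), so $\tau>0$, and then $v_{n}\ge u_{n}v_{n}/C\ge\tau/(2C)>0$ for all large $n$. Using $\chi_{n}=d_{2}w_{n}+\beta\alpha_{n}u_{n}v_{n}$ and $\chi_{n}/\alpha_{n}=\psi_{n}\to\psi$, one gets $v_{n}=w_{n}/\alpha_{n}\to v:=(\psi-\beta\tau)/d_{2}$ uniformly; hence $v>0$ on $\overline{\Omega}$, $w_{n}\to+\infty$ uniformly, and $u_{n}=\phi_{n}/(d_{1}+w_{n})\to\tau/v$ uniformly. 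Passing to the limit in $-\Delta\psi_{n}=g(u_{n},v_{n})=v_{n}(a_{2}-c_{2}v_{n})-b_{2}u_{n}v_{n}$, and using $\psi=d_{2}v+\beta\tau$ so that $\Delta\psi=d_{2}\Delta v$, gives the first line of \eqref{LNlim1} with its Neumann condition. Finally, integrating the $\phi_{n}$-equation over $\Omega$ gives $\int_{\Omega}u_{n}(a_{1}-b_{1}u_{n}-c_{1}v_{n})\,dx=0$; letting $n\to\infty$ (with $u_{n}\to\tau/v$, $u_{n}^{2}\to\tau^{2}/v^{2}$, $u_{n}v_{n}\to\tau$) and dividing by $\tau>0$ produces the integral constraint in \eqref{LNlim1}.

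\emph{Main obstacle.} Granting the a priori estimate \cite[Theorem~2.3]{LN2}---itself the deepest ingredient, and the reason $N\le 3$ and small $\beta$ are required---the genuinely delicate part is the nondegeneracy analysis: confirming that Case~A really produces the nontrivial system \eqref{LNlim2} rather than collapse to a trivial or semitrivial state, and that in Case~B necessarily $\tau>0$. Both rest on rescaling the steady-state equations and ruling out the resulting linear eigenvalue identities by means of the hypotheses $a_{1}/a_{2}\ne b_{1}/b_{2}$, $a_{1}/a_{2}\ne c_{1}/c_{2}$ and the requirement that $a_{2}/d_{2}$ not be a Neumann eigenvalue of $-\Delta$; this is where the bulk of the work lies.
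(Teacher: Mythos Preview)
The theorem you attempted is not proved in this paper. Theorem~\ref{LNthm} is quoted from Lou and Ni \cite{LN2} as background for the paper's own results; no proof of it appears here. The paper's new theorems are \ref{Linfthm} and \ref{limthm}, which treat the \emph{full} cross-diffusion limit $\alpha,\beta\to\infty$ with $\alpha/\beta\to\gamma>0$, not the one-sided limit $\alpha\to\infty$ with small $\beta$. There is therefore no ``paper's own proof'' to compare against.

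That said, your sketch follows the standard Lou--Ni strategy: pass to the flux variables, use the uniform $L^\infty$ bound from \cite[Theorem~2.3]{LN2} to extract $C^1$ compactness, and split on whether $w_n=\alpha_n v_n$ stays bounded. The outline is correct, but the part you label the ``main obstacle'' is not actually carried out: you assert that degenerate limits in Case~A and $\tau=0$ in Case~B are ruled out by rescaling and Krein--Rutman arguments, without performing them. In \cite{LN2} this is precisely where the substantive work sits, and it is not a short step. So as a proof your proposal is a plausible roadmap rather than a complete argument.

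For comparison, the closest thing in the present paper is the proof of Theorem~\ref{limthm} in Section~3. There the author uses the variables $w_n=d_1u_n-\gamma_n d_2 v_n$ and $z_n=(d_1/\alpha_n)u_n+u_nv_n$, shows $z_n$ converges to a constant $\tau$, and splits on $\tau>0$ versus $\tau=0$; the exclusion of trivial limits is handled by invoking Lemma~\ref{avoidlem}, itself taken from \cite{LN2}. The architecture is the same as yours in spirit, but adapted to the symmetric regime $\alpha,\beta\to\infty$.
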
\vspace{5mm}
Thanks to Theorem \ref{LNthm},
one can expect that nonconstant solutions of \eqref{SKT}
can be characterized by those of \eqref{LNlim1} or \eqref{LNlim2}
if $\alpha>0$ is sufficiently large and $\beta\ge 0$ is 
sufficiently small.
Indeed, such perturbations were verified by
\cite{LW1, LN2, LNY2, NWX, WWX, Wu, WX} in various senses.
In the first limiting behavior stated in (i) of Theorem \ref{LNthm},
$u_{n}v_{n}$ approaches a positive constant $\tau$ uniformly in
$\overline{\Omega }$, and thereby, it is natural to expect that
the first limiting system \eqref{LNlim1} can realize the
segregation phenomena of two competing species when 
one of cross-diffusive abilities of two species is very strong.
Once Theorem \ref{LNthm} was revealed by \cite{LN2},
there has been a great progress of study of the first limiting system
\eqref{LNlim1}
(e.g., \cite{KW, LNY, LNY2, MSY, NWX, WWX, Wu, WX, Yo}).
%Lou, Ni and Yotsutani \cite{LNY}, Wu \cite{Wu},
%Wu and Xu \cite{WX},
%Kolokolnikov and Wei \cite{KW},
%Ni, Wu and Xu \cite{NWX}).
Among other things, Lou, Ni and Yotsutani \cite{LNY}
obtained a global bifurcation structure of 
positive solutions in the one-dimensional case.
In the second limiting behavior stated in (ii) of Theorem \ref{LNthm},
the stationary density $v_{n}$ of the species with small 
cross-diffusive ability shrinks with the order $O(1/\alpha_{n})$
as $\alpha_{n}\to\infty$ since $\alpha_{n}v_{n}$
tends to a positive function $w$.
The author \cite{Ku} obtained a global bifurcation structure of
positive nonconstant solutions of \eqref{LNlim2}
in a special case when $\beta =0$ and 
$\Omega$ is a one-dimensional interval
Furthermore, Li and Wu \cite{LW1} investigated the instability
of positive nonconstant solutions near the bifurcation
point.
By gathering information on solutions of \eqref{LNlim1}
or \eqref{LNlim2} obtained in above mentioned papers,
we have a reasonable conjecture on the bifurcation structure
of \eqref{SKT} with large $\alpha$ and small $\beta\ge 0$
that the set of positive nonconsant solutions form 
bifurcation branches of saddle node type, and moreover,
the upper branches can be approximated by solutions of 
the first limiting system \eqref{LNlim1}, whereas
the lower branches can be characterized by solutions of
the second limiting system \eqref{LNlim2} by regarding
$a_{2}$ as a bifurcation parameter
(see \cite[Figure 1]{Ku}).
In addition, we note that only the second limiting situation as (ii) occurs
under homogeneous Dirichlet boundary conditions
(\cite{KY1, KY2}).

The purpose of this paper is to study the asymptotic
behavior of solutions of \eqref{SKT}
as both $\alpha$ and $\beta$ tend to infinity
with $\alpha/\beta$ approaching a positive number.
Ecologically, we expect that such a study can reveal
the mathematical mechanism of segregation of 
two competing species when the cross-diffusive abilities
of both species are strong.
To this end, we obtain the a priori $L^{\infty}$ estimate
of all solutions of \eqref{SKT}
%independently of $(\alpha, \beta )$ 
as follows:
\begin{thm}\label{Linfthm}
For any small $\eta >0$, 
there exists a positive constant $C=C(\eta, d_{i},a_{i}, b_{i}, c_{i})$
such that
if $\alpha>0$ and $\beta >0$
satisfy $\eta\le\alpha/\beta\le 1/\eta$,
then any solution $(u,v)$ of \eqref{SKT}
satisfies 
$$\max_{x\in\overline{\Omega}}u(x)\le C
\qquad\mbox{and}\qquad
\max_{x\in\overline{\Omega}}v(x)\le C.$$
\end{thm}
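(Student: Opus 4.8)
The plan is to establish the estimate by the maximum principle, in three steps. First some reductions. If $\eta>1$ the hypothesis on $\alpha/\beta$ is empty, so assume $\eta\le 1$; by the symmetry of \eqref{SKT} under the interchange $u\leftrightarrow v$, $\alpha\leftrightarrow\beta$, $(d_1,a_1,b_1,c_1)\leftrightarrow(d_2,a_2,c_2,b_2)$ one may also assume $\alpha\ge\beta$, so that $1\le\alpha/\beta\le 1/\eta$. If $u\equiv 0$ then $v$ solves the scalar logistic Neumann problem $d_2\Delta v+v(a_2-c_2v)=0$ and hence $0\le v\le a_2/c_2$ (symmetrically if $v\equiv 0$), so one may assume $u,v>0$ on $\overline\Omega$. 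Finally, every maximum considered below may be taken at an interior point of $\Omega$: this is the standard reduction exploiting the homogeneous Neumann conditions (Hopf's lemma applied to $w_1$ in Step 1, and flattening and reflecting across the smooth boundary in Steps 2--3), and I write the estimates as if the maxima are interior.

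\emph{Step 1 (a pointwise bound on $uv$).} Set $w_1:=(d_1+\alpha v)u\ge 0$; from the first equation of \eqref{SKT}, $-\Delta w_1=f(u,v)$ in $\Omega$ and $\partial_\nu w_1=0$ on $\partial\Omega$. At an interior maximum $x_1$ of $w_1$ one has $\Delta w_1(x_1)\le 0$, hence $f(u(x_1),v(x_1))\ge 0$, and since $u(x_1)>0$ this gives $b_1u(x_1)+c_1v(x_1)\le a_1$, so $u(x_1)\le a_1/b_1$, $v(x_1)\le a_1/c_1$, and therefore $\max_{\overline\Omega}w_1\le(d_1+\alpha a_1/c_1)(a_1/b_1)$. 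If $\alpha<1$ (hence also $\beta<1$), the bounds $u\le w_1/d_1\le(d_1+a_1/c_1)(a_1/b_1)/d_1$ and, symmetrically, $v\le(d_2+a_2/b_2)(a_2/c_2)/d_2$ already finish the proof; so assume henceforth $\alpha\ge 1$. Then, using $w_1\ge\alpha uv$,
\[
uv\ \le\ \frac{w_1}{\alpha}\ \le\ \frac{d_1a_1}{\alpha b_1}+\frac{a_1^2}{b_1c_1}\ \le\ \frac{d_1a_1}{b_1}+\frac{a_1^2}{b_1c_1}\ =:\ \Lambda\qquad\text{in }\overline\Omega .
\]

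\emph{Step 2 (a pointwise bound on $u$), the core of the argument.} Let $L:=\max_{\overline\Omega}u=u(x_0)$, and suppose $L\ge\max\{1,2a_1/b_1\}$ (otherwise nothing is to prove). At the interior maximum $x_0$, $\nabla u(x_0)=0$ and $\Delta u(x_0)\le 0$. Expanding $\Delta[(d_1+\alpha v)u]$ and $\Delta[(d_2+\beta u)v]$ at $x_0$ — the cross terms $2\alpha\,\nabla u\cdot\nabla v$ and $2\beta\,\nabla u\cdot\nabla v$ vanish since $\nabla u(x_0)=0$ — the two equations of \eqref{SKT} become, with $v_0:=v(x_0)$, $X:=\Delta u(x_0)$, $Y:=\Delta v(x_0)$, the linear system
\[
(d_1+\alpha v_0)X+\alpha LY=-L(a_1-b_1L-c_1v_0),\qquad
\beta v_0X+(d_2+\beta L)Y=-v_0(a_2-b_2L-c_2v_0).
\]
Its determinant is $d_1d_2+d_1\beta L+d_2\alpha v_0>0$, so
\[
X=\frac{L\big[(b_1L+c_1v_0-a_1)(d_2+\beta L)+\alpha v_0(a_2-b_2L-c_2v_0)\big]}{\,d_1d_2+d_1\beta L+d_2\alpha v_0\,}.
\]
Since $X\le 0$, one must have $(b_1L+c_1v_0-a_1)(d_2+\beta L)\le\alpha v_0(b_2L+c_2v_0-a_2)$; here the left side is $\ge(b_1L/2)(\beta L)=b_1\beta L^2/2$ (using $L\ge 2a_1/b_1$ and $c_1v_0,d_2\ge 0$), while the right side is $\le\alpha v_0(b_2L+c_2v_0)\le(\beta/\eta)(\Lambda/L)(b_2L+c_2\Lambda/L)\le\beta\Lambda(b_2+c_2\Lambda)/\eta$ (using $v_0\le\Lambda/L$ from Step 1, $\alpha\le\beta/\eta$, $L\ge 1$, and trivially when the right side is negative). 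The factor $\beta$ cancels, forcing $L^2\le 2\Lambda(b_2+c_2\Lambda)/(\eta b_1)$; thus $\max_{\overline\Omega}u\le\max\{1,\,2a_1/b_1,\,\sqrt{2\Lambda(b_2+c_2\Lambda)/(\eta b_1)}\}$, a bound depending only on $\eta,d_i,a_i,b_i,c_i$. \emph{Step 3} is the mirror image of Step 2, performed at a maximum $y_0$ of $v$ (where $\nabla v(y_0)=0$, $\Delta v(y_0)\le 0$, and $u(y_0)\le\Lambda/\max_{\overline\Omega}v$): eliminating $\Delta u(y_0)$ and using $\beta\le\alpha/\eta$ gives in the same way $\max_{\overline\Omega}v\le\max\{1,\,2a_2/c_2,\,\sqrt{2\Lambda(b_1\Lambda+c_1)/(\eta c_2)}\}$, and $C$ may be taken as the larger of the two constants.

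I expect Step 2 to be the only nontrivial point, and it rests on two observations. The first is that one must bound the \emph{product} $uv$ rather than $u$ directly, because the natural flux variable $w_1=(d_1+\alpha v)u$ has mean of order $\alpha$ and therefore cannot be controlled by its average; the bound on $uv$ then comes cheaply from a one-point maximum-principle argument on $w_1$. The second is that at a maximum of $u$ the identity $\nabla u(x_0)=0$ decouples the two cross-diffusion equations into a $2\times 2$ linear system for $(\Delta u(x_0),\Delta v(x_0))$, whose resolution converts the free sign condition $\Delta u(x_0)\le 0$ into an explicit upper bound for $u(x_0)$ in which the large coefficients $\alpha$ and $\beta$ cancel exactly, leaving a constant depending only on $\eta$ and the reaction and diffusion coefficients. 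The remaining issue — that boundary maxima may be treated as interior ones — is routine given the smoothness of $\partial\Omega$ and the Neumann conditions.
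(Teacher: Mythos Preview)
Your proof is correct and takes a genuinely different route from the paper's. Both arguments begin by reducing to the form \eqref{new} (your local computation --- solving the $2\times 2$ system for $(\Delta u,\Delta v)$ at a critical point of $u$ --- is precisely this elimination) and then use the maximum principle to obtain $F(u(x_0),v(x_0);\alpha,\beta)\ge 0$ at a maximum point $x_0$ of $u$. The divergence is in how this inequality is exploited. The paper argues by contradiction, coupling the maxima of $u$ and of $v$: assuming $\|u\|_\infty$ is too large, a level-set analysis of $F$ (Lemma~\ref{Fprolem}) shows the strip $\mathcal{R}=\{0<u\le u(x^*),\,v\ge v(x^*)\}$ lies in $\{F>0\}\cap\varSigma$; at the maximum $y^*$ of $v$ one then has both $G\ge 0$ (maximum principle) and $G<0$ (since $(u(y^*),v(y^*))\in\mathcal{R}$ and $F+G$ is a fixed linear function, Lemma~\ref{keylem}). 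Your argument instead first extracts the pointwise bound $uv\le\Lambda$ from the maximum principle applied to $(d_1+\alpha v)u$, and then inserts $v(x_0)\le\Lambda/L$ directly into $F(L,v(x_0))\ge 0$; once the leading factor $\beta$ cancels, an explicit quadratic bound on $L$ depending only on $\alpha/\beta$ falls out. This is more elementary, produces an explicit constant, and treats the bounds on $u$ and $v$ independently at their own maxima, whereas the paper's proof is genuinely two-point and hinges on the algebraic identity for $F+G$.

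One minor remark on your boundary reduction: the even reflection need not yield a $C^2$ extension (the mixed normal--tangential second derivatives do not match in general), so that justification is not quite right. But the conclusion $\Delta u(x_0)\le 0$ at a boundary maximum with Neumann data does hold directly: since $\nabla u(x_0)=0$, each tangential second derivative along a boundary curve $\gamma$ is $\tau^T D^2u(x_0)\tau+\nabla u(x_0)\cdot\gamma''(0)=\tau^T D^2u(x_0)\tau\le 0$, and the normal second derivative is $\le 0$ by the one-variable argument along the inward normal; summing gives $\Delta u(x_0)\le 0$. Alternatively, just invoke Lemma~\ref{MPlem} on the first equation of \eqref{new}, which delivers $F\ge 0$ at $x_0$ directly --- and that is exactly the inequality your linear-system computation produces.
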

Our approach of the proof is based on the maximum principle.
In view of some papers studying \eqref{SKT},
it can be said that
a usual method in considering the a priori $L^{\infty}$ 
estimate is to employ
the following change of variables 
\begin{equation}\label{change}
\phi (x)=\biggl(1+\dfrac{\alpha}{d_{1}}v\biggr)u,
\qquad
\psi (x)=\biggl(1+\dfrac{\beta}{d_{2}}u\biggr)v,
\end{equation}
which reduces
the quasilinear system \eqref{SKT} to
the semilinear one as follows
\begin{equation}\label{semi}
\begin{cases}
d_{1}\Delta \phi+
%u(a_{1}-b_{1}u-c_{1}v)
f(u,v)=0\qquad&\mbox{in}\ \Omega,\\
d_{2}\Delta \psi+
%v(a_{2}-b_{2}u-c_{2}v)
g(u,v)=0\qquad&\mbox{in}\ \Omega,\\
\phi\ge 0,\quad\psi\ge 0\qquad&\mbox{in}\ \Omega,\\
\partial_{\nu}\phi=\partial_{\nu}\psi=0
\qquad&\mbox{on}\ \partial\Omega,
\end{cases}
\end{equation}
where $(u,v)$ in reaction terms is regarded
as a function of $(\phi,\psi)$ determined by 
\eqref{change}.
A typical application of the maximum principle to 
the first equation of \eqref{semi}
enables us to know the nonnegativity of 
$f(u,v)$ 
at the maximum point of $\phi$.
However, obviously this maximum point is different 
from a maximum point of $u$,
and then, such a difference
often makes our construction of 
an $L^{\infty}$ bound of solutions be difficult.
%Then it can be said that this difference arises
%a difficulty to derive the uniform $L^{\infty}$
%estimate of any solution $(u,v)$ of \eqref{SKT}.
In \cite[Theorem 2.3]{LN2}, an exquisite combination of the above
maximum principle approach
and the Harnack inequality established the uniform $L^{\infty}$
estimate of any solution $(u,v)$ in the case when $N\le 3$
and
$\alpha>0$ is arbitrary but $\beta\ge 0$ is restricted to be small.
The restriction $N\le 3$ comes from
the Sobolev embedding theorem for the use of the Harnack inequality.

In this paper,
in order to get the uniform $L^{\infty}$ estimate
of any solution in a case when $\alpha> 0$ and
$\beta> 0$ are arbitrary as long as $\eta\le\alpha/\beta\le 1/\eta $,
we employ a different approach 
(without the change of variables \eqref{change})
to reduce \eqref{SKT} to the following form:
\begin{equation}\label{new}
\begin{cases}
(d_{1}d_{2}+d_{1}\beta u+d_{2}\alpha v)\Delta u
+2d_{2}\alpha\nabla u\!\cdot\!\nabla v+uF(u,v; \alpha,\beta )=0
\qquad&\mbox{in}\ \Omega,\\
(d_{1}d_{2}+d_{1}\beta u+d_{2}\alpha v)\Delta v
+2d_{1}\beta\nabla u\!\cdot\!\nabla v+vG(u,v; \alpha,\beta )=0
\qquad&\mbox{in}\ \Omega,\\
u\ge 0,\quad v\ge 0\qquad&\mbox{in}\ \Omega,\\
\partial_{\nu}u=\partial_{\nu}v=0
\qquad&\mbox{on}\ \partial\Omega,
\end{cases}
\end{equation}
where
\begin{equation}\label{FGdef}
\begin{cases}
F(u,v; \alpha,\beta ):=
(d_{2}+\beta u)(a_{1}-b_{1}u-c_{1}v)-\alpha v(a_{2}-b_{2}u-c_{2}v),\\
G(u,v; \alpha,\beta ):=
-\beta u(a_{1}-b_{1}u-c_{1}v)+(d_{1}+\alpha v)(a_{2}-b_{2}u-c_{2}v).
\end{cases}
\end{equation}
For \eqref{new}, as explained in the next section,
the maximum principle leads to the nonnegativity 
of $F$ (resp. $G$)
at the maximum point of $u$ (resp. $v$).
In the proof, we make use of a fact that
if $(u,v)\in\mathbb{R}^{2}_{+}$ satisfies $F(u,v,\alpha,\beta )\ge 0$ 
and
$(d_{2}b_{1}+d_{1}b_{2})u+(d_{2}c_{1}+d_{1}c_{2})v>d_{2}a_{1}+d_{1}a_{2}$,
then $G(u,v,\alpha,\beta )<0$.
By the combination of this fact and
a levelset analysis for 
$F$ and $G$,
the proof of Theorem \ref{Linfthm}
will be carried out.
Since our proof does not use the Harnack
inequality as well as the Sobolev embedding theorem,
then Theorem \ref{Linfthm} does not require
any restriction on the dimension number $N$. 

Thanks to Theorem \ref{Linfthm},
we can treat the asymptotic analysis for solutions of \eqref{SKT}
as $\alpha\to\infty$ and
$\beta\to\infty$ with
$\alpha /\beta \to \gamma $
for some $\gamma >0$.
We obtain the following limiting systems in such a
{\it full cross-diffusion limit}.
\begin{thm}\label{limthm}
Suppose that
$a_{1}/a_{2}\neq b_{1}/b_{2}$
and
$a_{1}/a_{2}\neq c_{1}/c_{2}$.
Let $\{(u_{n},v_{n})\}$ be any sequence of positive 
nonconstant solutions of \eqref{SKT} with
$\alpha=\alpha_{n}\to\infty$,
$\beta=\beta_{n}\to\infty$
and
$\gamma_{n}:=\alpha_{n}/\beta_{n}\to\gamma >0$
as $n\to\infty$.
Then either of the following two situations occurs,
passing to a subsequece if necessary;
\begin{enumerate}[(i)]
\item
there exist a positive function $u\in C^{2}(\overline{\Omega })$ and
a positive number $\tau $ 
such that
$$\lim_{n\to\infty}(u_{n},v_{n})=\biggl(u,\dfrac{\tau}{u}\biggr)
\qquad\mbox{in}\ C^{1}(\overline{\Omega})\times C^{1}(\overline{\Omega }),$$
and
$w_{n}(x):=d_{1}u_{n}(x)-\gamma_{n}d_{2}v_{n}(x)$ satisfies
$$
\lim_{n\to\infty}w_{n}=w
\qquad\mbox{in}\ C^{1}(\overline{\Omega })$$
with some function
$w\in C^{2}(\overline{\Omega })$ 
satisfying
\begin{equation}\label{IS}
\begin{cases}
\Delta w+f\biggl(\dfrac{\sqrt{w^{2}+4\gamma d_{1}d_{2}\tau}+w}{2d_{1}},
\dfrac{\sqrt{w^{2}+4\gamma d_{1}d_{2}\tau}-w}{2\gamma d_{2}}\biggr)
\vspace{1mm}\\
-\gamma g\biggl(\dfrac{\sqrt{w^{2}+4\gamma d_{1}d_{2}\tau}+w}{2d_{1}},
\dfrac{\sqrt{w^{2}+4\gamma d_{1}d_{2}\tau}-w}{2\gamma d_{2}}\biggr)=0
\qquad&\mbox{in}\ \Omega,\vspace{1mm} \\
\partial_{\nu}w=0\qquad&\mbox{on}\ \partial\Omega,\vspace{1mm}\\
\displaystyle\int_{\Omega }f\biggl(\dfrac{\sqrt{w^{2}+4\gamma d_{1}d_{2}\tau}+w}{2d_{1}},
\dfrac{\sqrt{w^{2}+4\gamma d_{1}d_{2}\tau}-w}{2\gamma d_{2}}\biggr)=0
\end{cases}
\end{equation}
and
$$
\biggl(u,\dfrac{\tau}{u}\biggr)=\biggl(\dfrac{\sqrt{w^{2}+4\gamma d_{1}d_{2}\tau}+w}{2d_{1}},
\dfrac{\sqrt{w^{2}+4\gamma d_{1}d_{2}\tau}-w}{2\gamma d_{2}}\biggr);
$$
\item
there exist nonnegative functions
$u$,
$v\in C(\overline{\Omega})$ such that
$uv=0$
in $\Omega$,
$$
\lim_{n\to\infty}(u_{n},v_{n})=(u,v)
\qquad\mbox{uniformly in}\ \overline{\Omega}$$
and $w_{n}(x):=d_{1}u_{n}(x)-\gamma_{n}d_{2}v_{n}(x)$ satisfies
$$
\lim_{n\to\infty}w_{n}=w
\qquad\mbox{in}\ C^{1}(\overline{\Omega })$$
with some sign-changing function $w$
satisfying
\begin{equation}\label{CS}
\begin{cases}
\Delta w+f\biggl(\dfrac{w_{+}}{d_{1}},\dfrac{w_{-}}{\gamma d_{2}}\biggr)
-\gamma g\biggl(\dfrac{w_{+}}{d_{1}},\dfrac{w_{-}}{\gamma d_{2}}\biggr)=0
\qquad&\mbox{in}\ \Omega,\\
\partial_{\nu}w=0
\qquad&\mbox{on}\ \partial\Omega,
\end{cases}
\end{equation}
and
$$
(u,v)=\biggl(\dfrac{w_{+}}{d_{1}},
\dfrac{w_{-}}{\gamma d_{2}}\biggr),
$$
where
$w_{+}:=\max\{w,0\}$ and
$w_{-}:=-\min\{w,0\}\ge 0$.
\end{enumerate}
\end{thm}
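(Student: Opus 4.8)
The plan is to reduce \eqref{SKT} to a single scalar elliptic equation for the combination $w_n:=d_1u_n-\gamma_nd_2v_n$, to show that the product $u_nv_n$ becomes spatially constant in the limit, and then to split into two cases according to whether that constant is positive or zero.

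\textbf{Step 1 (reduction and compactness).} By Theorem~\ref{Linfthm} — applicable for large $n$ since $\gamma_n\to\gamma>0$ forces $\eta\le\gamma_n\le1/\eta$ — there is a constant $C$, independent of $n$, with $\|u_n\|_{L^\infty(\Omega)}+\|v_n\|_{L^\infty(\Omega)}\le C$. Put $\Phi_n:=(d_1+\alpha_nv_n)u_n$ and $\Psi_n:=(d_2+\beta_nu_n)v_n$, so that $\Delta\Phi_n+f(u_n,v_n)=0$ and $\Delta\Psi_n+g(u_n,v_n)=0$ with homogeneous Neumann data. The starting point is the algebraic identity $\Phi_n-\gamma_n\Psi_n=w_n$, which shows that $w_n$ solves
\[\Delta w_n+f(u_n,v_n)-\gamma_ng(u_n,v_n)=0\ \text{ in }\Omega,\qquad\partial_\nu w_n=0\ \text{ on }\partial\Omega,\]
with right-hand side bounded in $L^\infty$. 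Elliptic $L^p$ estimates then bound $\{w_n\}$ in $W^{2,p}(\Omega)$ for every finite $p$, hence in $C^{1,\theta}(\overline\Omega)$, and likewise bound $\{\Phi_n-\overline{\Phi_n}\}$ and $\{\Psi_n-\overline{\Psi_n}\}$ (overbars denoting averages over $\Omega$). Passing to a subsequence, $w_n\to w$ in $C^1(\overline\Omega)$.

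\textbf{Step 2 (the product becomes constant; limiting equation).} Since $\alpha_nu_nv_n=\Phi_n-d_1u_n$, the oscillation of $\alpha_nu_nv_n$ over $\overline\Omega$ is bounded uniformly in $n$, so $u_nv_n$ has oscillation $O(1/\alpha_n)$; after a further subsequence $\overline{u_nv_n}\to\tau\ge0$, hence $u_nv_n\to\tau$ uniformly. Because $d_1u_n-\gamma_nd_2v_n=w_n$, $(d_1u_n)(\gamma_nd_2v_n)=\gamma_nd_1d_2u_nv_n$, and both $d_1u_n,\gamma_nd_2v_n$ are nonnegative, elementary algebra gives
\[u_n=\frac{\sqrt{w_n^2+4\gamma_nd_1d_2u_nv_n}+w_n}{2d_1},\qquad v_n=\frac{\sqrt{w_n^2+4\gamma_nd_1d_2u_nv_n}-w_n}{2\gamma_nd_2}.\]
Letting $n\to\infty$ yields $u_n\to u$ and $v_n\to v$ uniformly, where $u,v$ are given by these formulas with $(w_n,u_nv_n,\gamma_n)$ replaced by $(w,\tau,\gamma)$. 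Then $f(u_n,v_n)\to f(u,v)$ and $\gamma_ng(u_n,v_n)\to\gamma g(u,v)$ uniformly, so $w$ satisfies $\Delta w+f(u,v)-\gamma g(u,v)=0$, $\partial_\nu w=0$ classically (by elliptic regularity), and the relations $\int_\Omega f(u_n,v_n)=\int_\Omega g(u_n,v_n)=0$, obtained by integrating the two equations, pass to the limit.

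\textbf{Step 3 (the case $\tau>0$: alternative (i)).} Here $w^2+4\gamma d_1d_2\tau>0$ on $\overline\Omega$, so $u,v$ are positive $C^2$ functions with $uv\equiv\tau$, i.e.\ $v=\tau/u$; together with $\int_\Omega f(u,\tau/u)=0$ this is precisely \eqref{IS}. To upgrade to convergence in $C^1$ I would use that $u_n,v_n$ are bounded below by a fixed positive constant for large $n$, and solve the linear system $\nabla\Phi_n=(d_1+\alpha_nv_n)\nabla u_n+\alpha_nu_n\nabla v_n$, $\nabla\Psi_n=\beta_nv_n\nabla u_n+(d_2+\beta_nu_n)\nabla v_n$, whose determinant is the coefficient $d_1d_2+d_1\beta_nu_n+d_2\alpha_nv_n$ appearing in \eqref{new}; with $\|\nabla\Phi_n\|_\infty,\|\nabla\Psi_n\|_\infty\le C$ and $\gamma_n\to\gamma$ this gives $\|\nabla u_n\|_\infty+\|\nabla v_n\|_\infty\le C$, hence $\nabla(u_nv_n)=\alpha_n^{-1}(\nabla\Phi_n-d_1\nabla u_n)\to0$ uniformly, so $u_nv_n\to\tau$ in $C^1$; since $\tau>0$ the square root above is a smooth function of $(w_n,u_nv_n,\gamma_n)$ near the limit, and therefore $u_n\to u$ and $v_n\to v$ in $C^1(\overline\Omega)$.

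\textbf{Step 4 (the case $\tau=0$: alternative (ii), and the main obstacle).} Now $u=w_+/d_1$ and $v=w_-/(\gamma d_2)$, so $uv\equiv0$, $u_n\to u$, $v_n\to v$ uniformly, and $w$ solves \eqref{CS}; alternative (ii) follows once $w$ is shown to change sign, and this is the step I expect to be the crux. Suppose not, say $w\ge0$ on $\overline\Omega$ (the case $w\le0$ being symmetric, with the two species exchanged and $a_1/a_2\neq c_1/c_2$ used in place of $a_1/a_2\neq b_1/b_2$). Then $v\equiv0$, $u=w/d_1$, and \eqref{CS} reduces to $d_1\Delta u+u(a_1-b_1u)=0$, $\partial_\nu u=0$, $u\ge0$; comparing $u$ at a maximum and a minimum point and invoking the strong maximum principle forces $u\equiv0$ or $u\equiv a_1/b_1$, so $(u_n,v_n)$ converges uniformly to $(0,0)$ or to $(a_1/b_1,0)$. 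To exclude these I would note that $\Psi_n=(d_2+\beta_nu_n)v_n$ is positive and solves $\Delta\Psi_n+q_n\Psi_n=0$, $\partial_\nu\Psi_n=0$, with $q_n:=(a_2-b_2u_n-c_2v_n)/(d_2+\beta_nu_n)$, so that $\int_\Omega q_n\Psi_n=0$; likewise $\Phi_n>0$ solves $\Delta\Phi_n+q_n'\Phi_n=0$, $\partial_\nu\Phi_n=0$, with $q_n':=(a_1-b_1u_n-c_1v_n)/(d_1+\alpha_nv_n)$, so $\int_\Omega q_n'\Phi_n=0$. If $(u_n,v_n)\to(a_1/b_1,0)$ uniformly, the numerator of $q_n$ tends uniformly to $a_2-b_2a_1/b_1=(a_2b_1-a_1b_2)/b_1\neq0$ while the denominator stays positive, so $q_n$ has a fixed sign on $\overline\Omega$ for large $n$, contradicting $\int_\Omega q_n\Psi_n=0$; if $(u_n,v_n)\to(0,0)$ uniformly, the numerator of $q_n'$ tends uniformly to $a_1>0$, so $q_n'>0$ on $\overline\Omega$ for large $n$, contradicting $\int_\Omega q_n'\Phi_n=0$. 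Hence $w$ changes sign, which completes alternative (ii) and the proof.
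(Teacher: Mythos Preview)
Your proof is correct and follows the same overall strategy as the paper's: derive a scalar Neumann problem for $w_n=d_1u_n-\gamma_nd_2v_n$, show that $u_nv_n$ converges uniformly to a nonnegative constant $\tau$, recover $(u_n,v_n)$ algebraically from $w_n$ and the product, and then split according to whether $\tau>0$ or $\tau=0$. The implementations differ in two minor respects. First, to control $u_nv_n$ the paper introduces the single auxiliary function $z_n:=(d_1/\alpha_n)u_n+u_nv_n$, which satisfies $-\Delta z_n=\alpha_n^{-1}f(u_n,v_n)$ with Neumann data; elliptic regularity gives $z_n\to\tau$ in $C^1(\overline\Omega)$ directly, and the explicit inversion $(u_n,v_n)=(u_n,v_n)(w_n,z_n)$ then yields the $C^1$ convergence in case~(i) in one stroke, without the separate gradient-bound argument you carry out in Step~3. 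Second, to exclude constant limits in case~(ii) the paper simply invokes a lemma of Lou and Ni (\cite[Lemma~2.4]{LN2}, restated as Lemma~\ref{avoidlem}), whereas you argue directly from the signs of $\int_\Omega q_n\Psi_n$ and $\int_\Omega q_n'\Phi_n$; your argument is essentially the proof of that lemma in the cases needed and is more self-contained. Incidentally, your Step~4 is slightly more explicit than the paper's, since you spell out why $w\ge0$ forces $u$ to solve the Neumann logistic problem and hence to be constant, a reduction the paper leaves implicit.
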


It should be noted that 
a formal observation by Kan-on \cite{Ka3} implied the existence of 
a nonstationary version of the limiting system \eqref{IS}.
Thus it can be said that Theorem \ref{limthm} supports his observation
by a rigorous proof as far as the stationary problem.
In both situations (i) and (ii) of Theorem \ref{limthm},
$u_{n}v_{n}$ approaches some constant $\tau$ as
$\alpha_{n}$,
$\beta_{n}\to\infty$
and
$\alpha_{n}/\beta_{n}\to\gamma$.
Ecologically, this fact enables us to expect the segregation
of competing species occurs when cross-diffusive abilities
of both species are strong to the same degree.
In the limiting case (i) of Theorem \ref{limthm}, since $\tau>0$,
a typical expected ecological situation is so that
the high (resp. low) density area of $u$ is 
the low (resp. high) density area of $v$
(the {\it incomplete segregation}).
In the other limiting case (ii) of Theorem \ref{limthm},
since $\tau=0$,
living areas of two competing species completely segregate
each other (the {\it complete segregation}).
It is known that \eqref{CS} appears also in the fast reaction limit
of the Lotka-Volterra competition model
(namely,
in the limiting case as $c_1$, $b_2\to\infty$ and $c_1/b_2$ tends to some positive number
in \eqref{SKT} with $\alpha=\beta=0$), and then,
there are several papers discussing 
the existence of nonconstant solutions of \eqref{CS} and related issues
(e.g., \cite{DD, DHMP, DZ, HY, Ka2}).

The contents of this paper is as follows:
In Section 2, we derive a uniform $L^{\infty}$ estimate
of all solutions of \eqref{SKT} to prove Theorem \ref{Linfthm}.
In Section 3, we study the asymptotic behavior of 
solutions of \eqref{SKT} as $\alpha_{n}$, $\beta_{n}\to\infty$
and $\alpha_{n}/\beta_{n}\to\gamma$ to prove Theorem \ref{limthm}.
In Section 4, we discuss the existence of nonconstant solutions
of the limiting system \eqref{IS} from the bifurcation viewpoint.

Throughout this paper, 
the usual norms of the spaces $L^{p}(\Omega )$
for $p\in [\,1,\infty )$ and $L^{\infty }(\Omega )$
are denoted by
$$
\|u\|_{p}:=
\left(\displaystyle\int_{\Omega }|u(x)|^{p}dx\right)^{1/p},
\ \ \ \ 
\|u \|_{\infty }:=\mbox{ess.}\sup_{x\in\overline{\Omega }}|u(x)|.
$$
Hence $\|u\|_{\infty}=\max_{x\in\overline{\Omega}}$
in a case when $u\in C(\overline{\Omega })$.

\section{Uniform boundedness of steady states}
This section is devoted to the proof of 
Theorem \ref{Linfthm}.
Our strategy of the proof is to employ a maximum principle
approach for a reduction form \eqref{new}.
We begin with the reduction.
\begin{lem}
If $(u,v)$ is a solution of \eqref{SKT},
then $(u,v)$ is a solution of \eqref{new}.
\end{lem}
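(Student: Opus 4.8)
The plan is to expand the cross-diffusion terms, view the two resulting identities as a $2\times2$ linear system for the pair $(\Delta u,\Delta v)$, invert it by Cramer's rule, and then recombine the reaction terms into the form $uF$ and $vG$. First note that, by standard elliptic regularity for the quasilinear system \eqref{SKT} together with the smoothness of $\partial\Omega$, any solution $(u,v)$ belongs to $C^{2}(\overline{\Omega})\times C^{2}(\overline{\Omega})$, so all the computations below are pointwise. Using $\Delta[(d_{1}+\alpha v)u]=d_{1}\Delta u+\alpha\Delta(uv)$ and the product rule $\Delta(uv)=u\Delta v+2\nabla u\!\cdot\!\nabla v+v\Delta u$, the first equation of \eqref{SKT} is equivalent to
$$(d_{1}+\alpha v)\Delta u+\alpha u\,\Delta v+2\alpha\nabla u\!\cdot\!\nabla v+f(u,v)=0,$$
and symmetrically the second equation is equivalent to
$$\beta v\,\Delta u+(d_{2}+\beta u)\Delta v+2\beta\nabla u\!\cdot\!\nabla v+g(u,v)=0.$$

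Next, I would read these two identities as the linear system
$$\begin{pmatrix} d_{1}+\alpha v & \alpha u\\[1mm] \beta v & d_{2}+\beta u\end{pmatrix}\begin{pmatrix}\Delta u\\[1mm] \Delta v\end{pmatrix}=-\begin{pmatrix}2\alpha\nabla u\!\cdot\!\nabla v+f(u,v)\\[1mm] 2\beta\nabla u\!\cdot\!\nabla v+g(u,v)\end{pmatrix}.$$
The determinant of the coefficient matrix is $(d_{1}+\alpha v)(d_{2}+\beta u)-\alpha\beta uv=d_{1}d_{2}+d_{1}\beta u+d_{2}\alpha v$, which is strictly positive because $u,v\ge0$ and $d_{i},\alpha,\beta>0$; denote it by $D$. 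Solving by Cramer's rule and multiplying through by $D$, the $u$-component gives
$$D\,\Delta u+2d_{2}\alpha\nabla u\!\cdot\!\nabla v+(d_{2}+\beta u)f(u,v)-\alpha u\,g(u,v)=0,$$
and the $v$-component gives
$$D\,\Delta v+2d_{1}\beta\nabla u\!\cdot\!\nabla v+(d_{1}+\alpha v)g(u,v)-\beta v\,f(u,v)=0.$$

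Finally, I would recombine the reaction terms using $f(u,v)=u(a_{1}-b_{1}u-c_{1}v)$ and $g(u,v)=v(a_{2}-b_{2}u-c_{2}v)$: factoring out $u$ in the first case and $v$ in the second yields
$$(d_{2}+\beta u)f(u,v)-\alpha u\,g(u,v)=u\big[(d_{2}+\beta u)(a_{1}-b_{1}u-c_{1}v)-\alpha v(a_{2}-b_{2}u-c_{2}v)\big]=uF(u,v;\alpha,\beta),$$
$$(d_{1}+\alpha v)g(u,v)-\beta v\,f(u,v)=v\big[(d_{1}+\alpha v)(a_{2}-b_{2}u-c_{2}v)-\beta u(a_{1}-b_{1}u-c_{1}v)\big]=vG(u,v;\alpha,\beta),$$
with $F,G$ as in \eqref{FGdef}. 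Substituting these back gives exactly the two interior equations of \eqref{new}, while the sign conditions $u,v\ge0$ and the homogeneous Neumann boundary conditions are inherited verbatim from \eqref{SKT}. There is no analytic obstacle here — the lemma is purely an algebraic reformulation — and the only point requiring care is the bookkeeping of two cancellations: the $\alpha\beta uv$ terms must drop out both in the determinant (leaving $D=d_{1}d_{2}+d_{1}\beta u+d_{2}\alpha v$) and in the numerators from Cramer's rule (leaving the clean gradient terms $2d_{2}\alpha\nabla u\!\cdot\!\nabla v$ and $2d_{1}\beta\nabla u\!\cdot\!\nabla v$ rather than $u$- or $v$-weighted versions). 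The value of \eqref{new} is precisely that, unlike \eqref{semi}, it is the form on which the maximum principle applies directly to $u$ and to $v$.
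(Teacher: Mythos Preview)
Your proof is correct and follows essentially the same approach as the paper: expand the cross-diffusion terms to obtain a $2\times2$ system in $(\Delta u,\Delta v)$ and then eliminate. The paper simply substitutes the expression for $\Delta v$ from the second equation into the first (and vice versa), whereas you invoke Cramer's rule and track the determinant $D=d_{1}d_{2}+d_{1}\beta u+d_{2}\alpha v$ explicitly; these are equivalent manipulations, and your added remarks on the cancellation of the $\alpha\beta uv$ terms and the factoring into $uF$ and $vG$ just make explicit what the paper leaves to the reader.
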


\begin{proof}
Let $(u,v)$ be any solution of \eqref{SKT}.
By expanding the cross-diffusion terms,
one can see that the elliptic equations of \eqref{SKT}
are expressed as
$$
\begin{cases}
(d_{1}+\alpha v)\Delta u+2\alpha\nabla u\!\cdot\!\nabla v
+\alpha u\Delta v+f(u,v)=0\qquad&\mbox{in}\ \Omega,\\
(d_{2}+\beta u)\Delta v+2\beta\nabla u\!\cdot\!\nabla v
+\beta v\Delta u+g(u,v)=0\qquad&\mbox{in}\ \Omega.
\end{cases}
$$
Plugging the expression of $\Delta v$ from
the second equation into the first equation,
we obtain the first equation of \eqref{new}.
A similar procedure also gives the second equation of 
\eqref{new}.
\end{proof}
Applications of the following maximum principle
to \eqref{new}
will be useful in the proof of Theorem \ref{Linfthm}.
\begin{lem}[e.g., \cite{LN2}]\label{MPlem}
Suppose that $h\in C(\overline{\Omega}\times\mathbb{R})$
and $\boldsymbol{B}\in C(\overline{\Omega};\mathbb{R}^{N})$.
Then the followings (i) and (ii) hold true:
\begin{enumerate}[(i)]
\item
If $\underline{u}\in C^{2}(\Omega )
\cap C^{1}(\overline{\Omega })$ satisfies
$$
\Delta \underline{u}+
\boldsymbol{B}(x)\!\cdot\!\nabla\underline{u}+h(x,\underline{u})\ge 0
\quad\mbox{in}\ \Omega,\qquad
\partial_{\nu}\underline{u}\le 0
\quad\mbox{on}\ \partial\Omega,
$$
and $\underline{u}(x_{0})=\|\underline{u}\|_{\infty}$,
then $h(x_{0},\underline{u}(x_{0}))\ge 0$. \vspace{1mm}
\item
If $\overline{u}\in C^{2}(\Omega )
\cap C^{1}(\overline{\Omega })$ satisfies
$$
\Delta \overline{u}+
\boldsymbol{B}(x)\!\cdot\!\nabla\overline{u}+h(x,\overline{u})\le 0
\quad\mbox{in}\ \Omega,\qquad
\partial_{\nu}\overline{u}\ge 0
\quad\mbox{on}\ \partial\Omega,
$$
and $\overline{u}(x_{0})=\min_{x\in\overline{\Omega}}\overline{u}(x)$,
then $h(x_{0},\overline{u}(x_{0}))\le 0$.
\end{enumerate}
\end{lem}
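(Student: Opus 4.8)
The plan is to recognize Lemma~\ref{MPlem} as the classical strong maximum principle together with Hopf's boundary point lemma for the uniformly elliptic operator $L:=\Delta+\boldsymbol{B}(x)\cdot\nabla$, whose coefficients are bounded and continuous on $\overline{\Omega}$ and which carries no zeroth-order term, so that no sign restriction on $\boldsymbol{B}$ is needed. First I would reduce (ii) to (i): one applies (i) to $-\overline{u}$ with $h$ replaced by $\widetilde{h}(x,s):=-h(x,-s)$, which is legitimate since $\Delta(-\overline{u})+\boldsymbol{B}\cdot\nabla(-\overline{u})+\widetilde{h}(x,-\overline{u})\ge 0$ in $\Omega$, $\partial_{\nu}(-\overline{u})\le 0$ on $\partial\Omega$, and $-\overline{u}$ attains its maximum over $\overline{\Omega}$ at $x_{0}$; the conclusion of (i) then reads $\widetilde{h}(x_{0},-\overline{u}(x_{0}))\ge 0$, i.e.\ $h(x_{0},\overline{u}(x_{0}))\le 0$.

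For (i) I would argue by contradiction, assuming $h(x_{0},\underline{u}(x_{0}))<0$ and writing $M:=\|\underline{u}\|_{\infty}=\underline{u}(x_{0})$. Because $\underline{u}\in C^{1}(\overline{\Omega})$ and $h\in C(\overline{\Omega}\times\mathbb{R})$, the map $x\mapsto h(x,\underline{u}(x))$ is continuous on $\overline{\Omega}$, hence $h(x,\underline{u}(x))<0$ on $\overline{\Omega}\cap B_{\rho}(x_{0})$ for some $\rho>0$. The hypothesised differential inequality then gives $L\underline{u}=\Delta\underline{u}+\boldsymbol{B}(x)\cdot\nabla\underline{u}\ge-h(x,\underline{u}(x))>0$ throughout $\Omega\cap B_{\rho}(x_{0})$, so $\underline{u}$ is a strict $L$-subsolution there while still achieving its global maximum at $x_{0}$.

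Next I would split into cases. If $x_{0}\in\Omega$, then at this interior maximum $\nabla\underline{u}(x_{0})=0$ and $\Delta\underline{u}(x_{0})\le 0$, hence $L\underline{u}(x_{0})\le 0$, which contradicts the strict positivity just obtained. If $x_{0}\in\partial\Omega$, I would invoke the interior sphere condition (available since $\partial\Omega$ is smooth) to pick an open ball $B\subset\Omega\cap B_{\rho}(x_{0})$ with $x_{0}\in\partial B$, on which $\underline{u}\le M=\underline{u}(x_{0})$. Should $\underline{u}\equiv M$ on $B$, then $L\underline{u}\equiv 0$ there, a contradiction; otherwise the strong maximum principle forces $\underline{u}<M$ in $B$, and Hopf's lemma for the $L$-subsolution $\underline{u}$ on $B$ yields $\partial_{\nu}\underline{u}(x_{0})>0$, contradicting $\partial_{\nu}\underline{u}(x_{0})\le 0$. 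In every case one reaches a contradiction, forcing $h(x_{0},\underline{u}(x_{0}))\ge 0$.

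The only delicate point is the boundary case $x_{0}\in\partial\Omega$: one must ensure that Hopf's boundary point lemma genuinely applies, which is why the plan records that $L$ has no zeroth-order term and that $\Omega$ satisfies an interior sphere condition, and why it first rules out $\underline{u}$ being locally constant near $x_{0}$. The remaining ingredients — the continuity-based localization and the elementary interior-maximum computation — are routine, and since the statement is standard (cf.\ \cite{LN2}) the whole argument stays short.
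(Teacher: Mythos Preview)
Your argument is correct and entirely standard. Note, however, that the paper does not supply its own proof of Lemma~\ref{MPlem}: the lemma is quoted as a known tool with a reference to \cite{LN2}, so there is no in-paper proof to compare against. The route you take---the elementary second-derivative test at an interior maximum, and the contradiction via Hopf's boundary-point lemma when $x_{0}\in\partial\Omega$---is precisely the classical argument underlying that citation.
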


For the application of Lemma \ref{MPlem} to \eqref{new},
we need to know the profile of $F(u,v;\alpha,\beta)$
defined by \eqref{FGdef}.
The next lemma yields information on the zero levelset of 
$F(u,v; \alpha, \beta )$.
\begin{lem}\label{Fprolem}
Suppose that $\alpha >0$ and $\beta >0$.
Then the followings (i) and (ii) hold true.
\begin{enumerate}[(i)]
\item
If $u>a_{1}/b_{1}$, then there exists a positive number
$V(u;\alpha, \beta)$ such that
\begin{equation}\label{tatecut}
F(u,v;\alpha,\beta )\begin{cases}
<0\quad\mbox{for}\ 0<v<V(u;\alpha,\beta),\\
=0\quad\mbox{for}\ v=V(u;\alpha,\beta),\\
>0\quad\mbox{for}\ v>V(u;\alpha,\beta).
\end{cases}
\end{equation}
\item
Define
\begin{equation}\label{v0def}
\widetilde{v}_{0}(\alpha )
:=\begin{cases}
0\quad &\mbox{if}\ c_{1}/c_{2}<a_{1}/a_{2}
\ \mbox{and}\ \underline{\alpha}<\alpha<\overline{\alpha},\\
\dfrac{\alpha a_{2}+d_{2}c_{1}+
\sqrt{(\alpha a_{2}+d_{2}c_{1})^{2}-4\alpha d_{2}a_{1}c_{2}}}
{2\alpha c_{2}}
\quad &\mbox{otherwise},
\end{cases}
\end{equation}
where
$$
\begin{cases}
\underline{\alpha}:=
\dfrac{d_{2}(2a_{1}{c_{2}}-a_{2}c_{1})-
2d_{2}\sqrt{a_{1}c_{2}(a_{1}c_{2}-a_{2}c_{1})}}
{a_{2}^{\,2}},\\
\overline{\alpha}:=
\dfrac{d_{2}(2a_{1}{c_{2}}-a_{2}c_{1})+
2d_{2}\sqrt{a_{1}c_{2}(a_{1}c_{2}-a_{2}c_{1})}}
{a_{2}^{\,2}}.
\end{cases}
$$
If $v>\widetilde{v}_{0}(\alpha )$,
then there exists a positive number
$U(v;\alpha, \beta)$ such that
\begin{equation}\label{yokocut}
F(u,v;\alpha,\beta )\begin{cases}
>0\quad\mbox{for}\ 0<u<U(v;\alpha,\beta),\\
=0\quad\mbox{for}\ u=U(v;\alpha,\beta).\\
<0\quad\mbox{for}\ u>U(v;\alpha,\beta).
\end{cases}
\end{equation}
\end{enumerate}
\end{lem}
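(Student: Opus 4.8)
The plan is to read off the sign patterns \eqref{tatecut} and \eqref{yokocut} by regarding $F(u,v;\alpha,\beta)$ as a single-variable quadratic — in $v$ with $u$ held fixed for part (i), and in $u$ with $v$ held fixed for part (ii) — and tracking only the signs of the leading and constant coefficients together with strict convexity or concavity.

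For (i), fix $u>a_1/b_1$ and expand $F$ in powers of $v$. A short computation gives leading coefficient $\alpha c_2>0$ and value at $v=0$ equal to $(d_2+\beta u)(a_1-b_1u)$, which is strictly negative since $d_2+\beta u>0$ and $a_1-b_1u<0$. Hence $v\mapsto F(u,v;\alpha,\beta)$ is a strictly convex parabola that is negative at $v=0$ and tends to $+\infty$, so $0$ lies strictly between its two real zeros; it therefore has exactly one positive zero $V(u;\alpha,\beta)$, and \eqref{tatecut} follows from convexity.

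For (ii), the crucial observation is that the constant term in the $u$-expansion of $F$, namely $p(v):=F(0,v;\alpha,\beta)=d_2(a_1-c_1v)-\alpha v(a_2-c_2v)=\alpha c_2v^2-(d_2c_1+\alpha a_2)v+d_2a_1$, is independent of $\beta$, is strictly convex in $v$, and satisfies $p(0)=d_2a_1>0$. I would analyse its discriminant $\Delta:=(d_2c_1+\alpha a_2)^2-4\alpha c_2d_2a_1$ by viewing it as a quadratic in $\alpha$, namely $\Delta=a_2^2\alpha^2+2d_2(a_2c_1-2a_1c_2)\alpha+d_2^2c_1^2$, whose discriminant in $\alpha$ equals $16d_2^2a_1c_2(a_1c_2-a_2c_1)$. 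If $c_1/c_2<a_1/a_2$ this is positive, the $\alpha$-quadratic $\Delta$ has exactly the two roots $\underline{\alpha}<\overline{\alpha}$ displayed in \eqref{v0def} (a direct computation), both of them positive, and $\Delta<0$ precisely when $\underline{\alpha}<\alpha<\overline{\alpha}$; in that range $p>0$ on all of $\mathbb{R}$, so any $v>0=\widetilde{v}_0(\alpha)$ works. In the remaining cases one has $\Delta\ge0$ — either because $a_1c_2-a_2c_1\le0$ makes the $\alpha$-quadratic nonnegative everywhere, or because $\alpha\le\underline{\alpha}$ or $\alpha\ge\overline{\alpha}$ — so $p$ has real zeros; since $p(0)>0$, $p$ is convex, and its vertex sits at $(d_2c_1+\alpha a_2)/(2\alpha c_2)>0$, both zeros are positive and the larger one is exactly $\widetilde{v}_0(\alpha)$. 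Either way, $v>\widetilde{v}_0(\alpha)$ forces $p(v)>0$, i.e. $F(0,v;\alpha,\beta)>0$. Now expand $F$ in powers of $u$: its leading coefficient is $-\beta b_1<0$ while its value at $u=0$ equals $p(v)>0$, so $u\mapsto F(u,v;\alpha,\beta)$ is a strictly concave parabola, positive at the origin and tending to $-\infty$; it therefore has exactly one positive zero $U(v;\alpha,\beta)$, and \eqref{yokocut} follows.

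The only step requiring real care is the bookkeeping inside (ii): checking that the two roots of the $\alpha$-quadratic $\Delta$ are indeed $\underline{\alpha}$ and $\overline{\alpha}$ and that both are positive (this uses $a_2c_1<a_1c_2$, whence $a_2c_1<2a_1c_2$, and reduces to the identity $(2a_1c_2-a_2c_1)^2-4a_1c_2(a_1c_2-a_2c_1)=a_2^2c_1^2>0$), and verifying in the complementary case that the sum and product of the zeros of $p$ are positive so that the larger zero is the stated expression. Everything else is routine manipulation of quadratics, and no analytic ingredient beyond positivity of the coefficients enters.
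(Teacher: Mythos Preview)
Your argument is correct and follows essentially the same route as the paper: for (i) you view $F$ as a convex quadratic in $v$ with negative constant term, and for (ii) you first show $F(0,v;\alpha,\beta)>0$ for $v>\widetilde{v}_0(\alpha)$ by analysing the quadratic $p(v)$, then view $F$ as a concave quadratic in $u$ with positive constant term. The only difference is that you spell out the discriminant analysis for $p$ (treating $\Delta$ as a quadratic in $\alpha$, computing its roots and checking their positivity) where the paper simply asserts the result as ``a straightforward computation''; this extra bookkeeping is accurate and matches the formulas in \eqref{v0def}.
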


\begin{proof}
(i)\ 
%Suppose that $\alpha >0$ and $\beta >0$.
We first observe the sign of $F$ on the half line
$\{(u,0)\,:\,u>0\}$ on $u$ axis as follows:
$$
F(u,0; \alpha, \beta )
=(d_{2}+\beta u)(a_{1}-b_{1}u)
\begin{cases}
>0\quad \mbox{for}\ 0<u<a_{1}/b_{1},\\
<0\quad\mbox{for}\ u>a_{1}/b_{1}.
\end{cases}
$$
Next, for each fixed $u>a_{1}/b_{1}$,
we investigate the profile of the function
$v\mapsto F(u,v;\alpha,\beta)$
(regarded as a function with respect to $v>0$).
By the form of the quadratic function
$$
F(u,v;\alpha,\beta )=
\alpha c_{2}v^{2}
-\{c_{1}(d_{2}+\beta u)+\alpha (a_{2}-b_{2}u)\}v+
(d_{2}+\beta u)(a_{1}-b_{1}u)
$$
and the fact that
$F(u,0; \alpha, \beta )<0$ for any fixed $u>a_{1}/b_{1}$,
we obtain \eqref{tatecut} with
\begin{equation}
\begin{split}
&V(u;\alpha, \beta)\\
&=\dfrac{c_{1}(d_{2}+\beta u)+\alpha (a_{2}-b_{2}u)+
\sqrt{\{c_{1}(d_{2}+\beta u)+\alpha (a_{2}-b_{2}u)\}^{2}
-4\alpha c_{2}(d_{2}+\beta u)(a_{1}-b_{1}u)}}
{2\alpha c_{2}}.
\end{split}
\nonumber
\end{equation}

(ii)\
Following a similar argument, we first check the sign
of $F$ on the half line
$\{(0,v)\,:\,v>0\}$ on $v$ axis.
By virtue of
$$
F(0,v;\alpha, \beta )=\alpha c_{2}v^{2}-(\alpha a_{2}+d_{2}c_{1})v
+d_{2}a_{1},$$
a straightforward computation enables us to check that if $c_{1}/c_{2}< a_{1}/a_{2}$ and 
$\underline{\alpha}<\alpha<\overline{\alpha}$, then
$F(0,v;\alpha,\beta )> 0$ for any $v>0$;
otherwise,
$$
F(0,v;\alpha,\beta)\begin{cases}
>0\quad &\mbox{for}\ v\in (0,\underline{v}_{0}(\alpha))\cup
(\overline{v}_{0}(\alpha ),\infty),\\
<0\quad &\mbox{for}\ v\in
(\underline{v}_{0}(\alpha ), \overline{v}_{0}(\alpha )),
\end{cases}
$$
where
$$
\begin{cases}
\underline{v}_{0}(\alpha )=
\dfrac{\alpha a_{2}+d_{2}c_{1}-\sqrt{
(\alpha a_{2}+d_{2}c_{1})^{2}-4\alpha d_{2}a_{1}c_{2}}}
{2\alpha c_{2}},\vspace{1mm} \\
\overline{v}_{0}(\alpha )=
\dfrac{\alpha a_{2}+d_{2}c_{1}+\sqrt{
(\alpha a_{2}+d_{2}c_{1})^{2}-4\alpha d_{2}a_{1}c_{2}}}
{2\alpha c_{2}}.
\end{cases}
$$
Hence it follows that $F(0,v;\alpha, \beta)>0$ for
$v>\widetilde{v}_{0}(\alpha )$, where
$$
\widetilde{v}_{0}(\alpha ):=\begin{cases}
0\quad &\mbox{if}\ c_{1}/c_{2}<a_{1}/a_{2}
\ \mbox{and}\ \underline{\alpha}<\alpha<\overline{\alpha},\\
\overline{v}_{0}(\alpha )
\quad &\mbox{otherwise}.
\end{cases}
$$
Next, for any fixed $v>\widetilde{v}_{0}(\alpha)$,
we check the profile of $u\mapsto F(u,v;\alpha,\beta)$.
Since $F(0,v;\alpha,\beta )>0$ for 
$v>\widetilde{v}_{0}(\alpha )$, then we fix such $v$ arbitrarily and
regard
$$
u\mapsto F(u,v;\alpha,\beta )=
-\beta b_{1}u^{2}+\{\beta(a_{1}-c_{1}v)+\alpha b_{2}v-d_{2}b_{1}\}u
+ac_{2}v^{2}-(\alpha a_{2}+d_{2}c_{1})v+d_{2}a_{1}$$
as a 
quadratic function with respect to $u>0$
to obtain \eqref{yokocut} with
\begin{equation}\label{Udef}
\begin{split}
U(v;\alpha,\beta)
=&\dfrac{1}{2\beta b_{1}}
\biggl(
(\alpha b_{2}-\beta c_{1})v+\beta a_{1}-d_{2}b_{1}\\
&+
\sqrt{\{(\alpha b_{2}-\beta c_{1})v+\beta a_{1}-d_{2}b_{1}\}^{2}
+4\beta b_{1}\{
\alpha c_{2}v^{2}-(\alpha a_{2}+d_{2}c_{1})v+d_{2}a_{1}\}
}\biggr).
\end{split}
\end{equation}
Then we complete the proof of Lemma \ref{Fprolem}.
\end{proof}
The next lemma is an elementary but a key property
for the proof of Theorem \ref{Linfthm}.
To state the property, we define an unbounded region
$\varSigma$ by
$$\varSigma:=\{
(u,v)\in\mathbb{R}^{2}_{+}\,:\,
d_{2}a_{1}+d_{1}a_{2}-(d_{2}b_{1}+d_{1}b_{2})u-(d_{2}c_{1}+d_{1}c_{2})v<0\}.
$$
It should be noted that
$\varSigma$ is independent of $\alpha$ and $\beta $.
\begin{lem}\label{keylem}
If $(u,v)\in\varSigma$ satisfies
$F(u,v;\alpha, \beta )\ge 0$, then $G(u,v,\alpha,\beta )< 0$.
\end{lem}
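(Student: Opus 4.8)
The plan is to reduce the statement to a purely algebraic sign‑chasing argument by exhibiting two elimination identities — the same kind of elimination that produced \eqref{new} from \eqref{SKT}. Writing $f_{0}:=a_{1}-b_{1}u-c_{1}v$ and $g_{0}:=a_{2}-b_{2}u-c_{2}v$, one reads off from \eqref{FGdef} that
\[
\begin{pmatrix} F \\ G \end{pmatrix}
=\begin{pmatrix} d_{2}+\beta u & -\alpha v \\ -\beta u & d_{1}+\alpha v \end{pmatrix}
\begin{pmatrix} f_{0} \\ g_{0} \end{pmatrix},
\]
and the determinant of this $2\times2$ matrix equals $P:=d_{1}d_{2}+d_{1}\beta u+d_{2}\alpha v>0$. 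Multiplying by the adjugate (equivalently, forming the combinations $\beta u\,F+(d_{2}+\beta u)G$ and $(d_{1}+\alpha v)F+\alpha v\,G$ and expanding, in which the ``other'' reaction factor cancels identically) gives
\[
P\,g_{0}=\beta u\,F+(d_{2}+\beta u)G,
\qquad
P\,f_{0}=(d_{1}+\alpha v)F+\alpha v\,G .
\]
These two identities are the only computation involved, and they are routine polynomial expansions.

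With them in hand I would argue by contradiction. Suppose $(u,v)\in\varSigma$ satisfies $F\ge0$ but, contrary to the claim, $G\ge0$ as well. Since $u,v\ge0$ and $\alpha,\beta>0$, all the multipliers $\beta u$, $d_{2}+\beta u$, $d_{1}+\alpha v$, $\alpha v$ are nonnegative, and $P>0$; hence the first identity forces $g_{0}\ge0$ and the second forces $f_{0}\ge0$. Then $d_{2}f_{0}+d_{1}g_{0}\ge0$. But $d_{2}f_{0}+d_{1}g_{0}=d_{2}a_{1}+d_{1}a_{2}-(d_{2}b_{1}+d_{1}b_{2})u-(d_{2}c_{1}+d_{1}c_{2})v$, which is exactly the quantity required to be strictly negative on $\varSigma$. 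This contradiction shows $G<0$.

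There is essentially no analytic obstacle here; the entire content is the observation that, for fixed $(u,v,\alpha,\beta)$, the pair $(F,G)$ is an invertible linear image of $(u^{-1}f,v^{-1}g)=(f_{0},g_{0})$ whose inverse is entrywise nonnegative, combined with the fact that $\varSigma$ is precisely the region where the particular positive combination $d_{2}f_{0}+d_{1}g_{0}$ is negative. The one point meriting care is simply to record the two elimination identities correctly and to verify that every coefficient appearing in them has the right sign on $\mathbb{R}^{2}_{+}$; once that is done the conclusion is immediate.
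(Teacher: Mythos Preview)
Your argument is correct, but it is considerably more elaborate than the paper's. The paper simply adds the two expressions in \eqref{FGdef}: the terms $\pm\beta u\,f_{0}$ and $\mp\alpha v\,g_{0}$ cancel, leaving
\[
F(u,v;\alpha,\beta)+G(u,v;\alpha,\beta)
=d_{2}f_{0}+d_{1}g_{0}
=d_{2}a_{1}+d_{1}a_{2}-(d_{2}b_{1}+d_{1}b_{2})u-(d_{2}c_{1}+d_{1}c_{2})v,
\]
which is exactly the quantity defining $\varSigma$. Hence on $\varSigma$ one has $F+G<0$, and $F\ge0$ forces $G<0$ in one line. Your route --- inverting the $2\times2$ relation to recover $f_{0},g_{0}$ from $F,G$ with nonnegative coefficients, and then arguing by contradiction --- reaches the same endpoint $d_{2}f_{0}+d_{1}g_{0}\ge0$ but through the adjugate rather than the direct sum. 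What you gain is an explicit display of the invertibility structure (the same structure that produces \eqref{new} from \eqref{SKT}), which is conceptually pleasant; what you lose is the brevity of noticing that the particular combination $F+G$ already \emph{is} the $\varSigma$ quantity, with no inversion needed.
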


\begin{proof}
Since \eqref{FGdef} yields
$$
F(u,v;\alpha, \beta )+G(u,v;\alpha,\beta )
=d_{2}a_{1}+d_{1}a_{2}-(d_{2}b_{1}+d_{1}b_{2})u-(d_{2}c_{1}+d_{1}c_{2})v,$$
the desired property follows.
\end{proof}
By Lemmas \ref{MPlem}-\ref{keylem}, 
we shall accomplish the proof of Theorem \ref{Linfthm}.
\begin{proof}[Proof of Theorem \ref{Linfthm}]
For any small $\eta >0$,
let $\alpha>0$ and $\beta >0$ satisfy
\begin{equation}\label{condi}
\eta\le\dfrac{\alpha}{\beta}\le\dfrac{1}{\eta}.
\end{equation}
We first discuss the case when $0<\alpha\le\eta$.
In this case, \eqref{condi} implies $\beta \le 1$.
Then we can use an estimate obtained by Lou and Ni
\cite[Lemma 2.3]{LN1} to know 
$$
\|u\|_{\infty}\le
C_{1}\biggl(1+\dfrac{\alpha}{d_{1}}\biggr)\le
C_{1}\biggl(1+\dfrac{\eta}{d_{1}}\biggr)
\quad\mbox{and}\quad
\|v\|_{\infty}\le
C_{1}\biggl(1+\dfrac{\beta}{d_{2}}\biggr)\le
C_{1}\biggl(1+\dfrac{1}{d_{2}}\biggr)
$$
with some positive constant
$C_{1}=C_{1}(a_{i},b_{i},c_{i})$.
Similarly, also in the case $0<\beta\le\eta$,
their result leads to
$$
\|u\|_{\infty}\le
C_{1}\biggl(1+\dfrac{\alpha}{d_{1}}\biggr)\le
C_{1}\biggl(1+\dfrac{1}{d_{1}}\biggr)
\quad\mbox{and}\quad 
\|v\|_{\infty}\le
C_{1}\biggl(1+\dfrac{\beta}{d_{2}}\biggr)\le
C_{1}\biggl(1+\dfrac{\eta}{d_{2}}\biggr).
$$
Then, for the sake of the proof of Theorem \ref{Linfthm},
we may assume
\begin{equation}\label{condi2}
\alpha>\eta\quad\mbox{and}\quad\beta>\eta,
\end{equation}
in addition to \eqref{condi}.
Our first aim is to prove that 
any solution $(u,v)$ of \eqref{SKT} satisfies
\begin{equation}\label{aim}
\|u\|_{\infty}\le
\max\biggl\{
\dfrac{a_{1}}{b_{1}},
U\biggl(\max\biggl\{\dfrac{d_{2}a_{1}+d_{1}a_{2}}{d_{2}b_{1}+d_{1}b_{2}},
\widetilde{v}_{0}(\alpha )\biggr\}; \alpha, \beta \biggr)\biggr\},
\end{equation}
where $U>0$ and $\widetilde{v}_{0}(\alpha )\ge 0$ 
are numbers represented as \eqref{Udef} and \eqref{v0def}, respectively.
Here we recall Lemma \ref{Fprolem} to note that
if $u>\max\{a_{1}/b_{1}, U(\widetilde{v}_{0}(\alpha );
\alpha, \beta )\}$, then 
$u=U(v;\alpha,\beta )$ and
$v=V(u;\alpha,\beta)$ are inverses of each other,
and these functions are monotone increasing with
$$
\lim_{u\to\infty}V(u;\alpha, \beta )=\infty
\quad\mbox{and}\quad
\lim_{v\to\infty}U(v;\alpha, \beta )=\infty,
$$
where $U(\widetilde{v}_{0}(\alpha );
\alpha, \beta ):=\lim_{v\downarrow\widetilde{v}_{0}(\alpha )}
U(v;\alpha,\beta )$.
In order to show \eqref{aim} for any solution $(u,v)$ of \eqref{SKT},
we employ a proof by contradiction.
Suppose for contradiction that \eqref{SKT}
admits a solution $(u,v)$ of \eqref{SKT} satisfying
\begin{equation}\label{contra}
\|u\|_{\infty}>
\max\biggl\{
\dfrac{a_{1}}{b_{1}},
U\biggl(\max\biggl\{\dfrac{d_{2}a_{1}+d_{1}a_{2}}{d_{2}b_{1}+d_{1}b_{2}},
\widetilde{v}_{0}(\alpha )\biggr\}; \alpha, \beta \biggr)\biggr\}.
\end{equation}
Let $x^{*}\in\overline{\Omega}$ be a maximum point of $u$,
that is,
$\|u\|_{\infty}=u(x^{*})$.
Since $(u,v)$ satisfies \eqref{new},
then we know
\begin{equation}
F(u(x^{*}), v(x^{*});\alpha, \beta )\ge 0
\nonumber
\end{equation}
by applying (i) of Lemma \ref{MPlem} to
the first equation of \eqref{new}.
Together with $u(x^{*})>a_{1}/b_{1}$ from \eqref{contra},
we know from
(i) of Lemma \ref{Fprolem} that
\begin{equation}\label{vVstar}
v(x^{*})\ge V(u(x^{*});\alpha,\beta ).
\end{equation}
If $\widetilde{v}_{0}(\alpha )\ge (d_{2}a_{1}+d_{1}a_{2})/
(d_{2}b_{1}+d_{1}b_{2})$,
then it follows from \eqref{contra} that
\begin{equation}\label{udomain}
    u(x^{*})>\dfrac{a_{1}}{b_{1}}\quad\mbox{and}\quad u(x^{*})>U(\widetilde{v}_{0}(\alpha ); \alpha, \beta ).
\end{equation}
Here we recall that the function $v\mapsto V(v;\alpha, \beta )$
is monotone increasing for $v>\widetilde{v}_{0}(\alpha )$ as well as
the function
$u\mapsto U(u;\alpha, \beta )$ is monotone increasing
for $u>\max\{a_{1}/b_{1}, U(\widetilde{v}_{0}(\alpha ); \alpha, \beta )\}$
since $V(u.\alpha, \beta)$ is an inverse function of 
$U(v;\alpha, \beta )$.
Then \eqref{udomain} leads to
$V(u(x^{*});\alpha,\beta )>\widetilde{v}_{0}(\alpha )$, and thereby,
\begin{equation}\label{vstar}
v(x^{*})>\widetilde{v}_{0}(\alpha )\ge
\dfrac{d_{2}a_{1}+d_{1}a_{2}}{d_{2}b_{1}+d_{1}b_{2}}.
\end{equation}
Then, in the case when
$\widetilde{v}_{0}(\alpha )\ge (d_{2}a_{1}+d_{1}a_{2})/(d_{2}b_{1}+d_{1}b_{2})$,
by virtue of  \eqref{udomain}, \eqref{vstar} and
the monotone increasing property of $U(v;\alpha,\beta)$
as well as $V(u;\alpha,\beta)$,
we know from Lemma \ref{Fprolem} that
the strip region
$$\mathcal{R}:=\{(u,v)\in\mathbb{R}^{2}_{+}\,:\,
0<u\le u(x^{*}),\ v\ge v(x^{*})\}$$
is contained in the positive region of $F$, that is,
$$
\mathcal{R}\subset\{F>0\},
$$
where
$\{F>0\}:=\{(u,v)\in\mathbb{R}^{2}_{+}\,:\,F(u,v;\alpha,\beta)>0\}$.
%With \eqref{vstar}, we deduce 
%$$\mathcal{R}\subset \{F>0\}\cap\varGamma .$$
On the other hand,
if $\widetilde{v}_{0}(\alpha )< (d_{2}a_{1}+d_{1}a_{2})/
(d_{2}b_{1}+d_{1}b_{2})$,
then \eqref{contra} implies
$$
u(x^{*})>a_{1}/b_{1}\quad\mbox{and}\quad
u(x^{*})>U\biggl(\dfrac{d_{2}a_{1}+d_{1}a_{2}}
{d_{2}b_{1}+d_{1}b_{2}};\alpha, \beta\biggr).$$
Since $u\mapsto V(u;\alpha, \beta )$ is monotone increasing
for $V>\widetilde{v}_{0}(\alpha )$ and it is an inverse function of 
$U(v;\alpha, \beta )$, then
$V(u(x^{*});\alpha,\beta )>(d_{2}a_{1}+d_{1}a_{2})/
(d_{2}b_{1}+d_{1}b_{2})$.
With \eqref{vVstar}, one can see 
\begin{equation}\label{vstar2}
v(x^{*})>
\dfrac{d_{2}a_{1}+d_{1}a_{2}}{d_{2}b_{1}+d_{1}b_{2}}>
\widetilde{v}_{0}(\alpha ).
\end{equation}
Similarly, the monotone increasing property of $U(v;\alpha,\beta)$
or $V(u;\alpha,\beta)$ implies  
$\mathcal{R}\subset \{F>0\}$.
Together with \eqref{vstar} and \eqref{vstar2},
we can see that our assumption \eqref{contra} leads to
\begin{equation}\label{RinG}
\mathcal{R}\subset\{F>0\}\cap\varSigma.
\end{equation}
Next, let $y^{*}\in\overline{\Omega }$ be a maximum point of 
$v$, namely,
$\|v\|_{\infty}=v(y^{*})$.
Then applying (i) of Lemma \ref{MPlem} to the second equation
of \eqref{new}, one can see that
\begin{equation}\label{mp}
G(u(y^{*}), v(y^{*});\alpha, \beta )\ge 0.
\end{equation} 
Since $0\le u(y^{*})\le u(x^{*})=\|u\|_{\infty}$ and 
$v(x^{*})\le v(y^{*})=\|v\|_{\infty}$,
then $(u(y^{*}),v(y^{*}))\in\mathcal{R}$.
Then \eqref{RinG} implies that
$$
F(u(y^{*}),v(y^{*});\alpha, \beta)>0
\quad\mbox{and}\quad
(u(y^{*}), v(y^{*}))\in\varSigma.
$$
Therefore, Lemma \ref{keylem} leads to
$G(u(y^{*}), v(y^{*});\alpha, \beta )< 0$.
Hence this contradicts \eqref{mp}.
Consequently, the above proof by contradiction enables us
to conclude that all solutions of \eqref{SKT} satisfy \eqref{aim}.

Next we shall find a positive constant
$C_{2}=C_{2}(\eta, d_{i}, a_{i}, b_{i}, c_{i})$ such that
\begin{equation}\label{C2}
U\biggl(\max\biggl\{\dfrac{d_{2}a_{1}+d_{1}a_{2}}{d_{2}b_{1}+d_{1}b_{2}},
\widetilde{v}_{0}(\alpha )\biggr\}; \alpha, \beta \biggr)\le C_{2}
\end{equation}
for any $\alpha$,
$\beta>\eta$ with
$\eta\le\alpha/\beta\le 1/\eta$.
To this end,
we recall $U$ and $\widetilde{v}_{0}(\alpha )$ defined by
\eqref{Udef} and \eqref{v0def} are expressed as
\begin{equation}
\begin{split}
U(v; \alpha, \beta )=
&\dfrac{1}{2b_{1}}\Biggl(
(rb_{2}-c_{1})v+a_{1}-\dfrac{d_{2}b_{1}}{\beta }\\
&
\sqrt{
\biggl\{(rb_{2}-c_{1})v+a_{1}
-\dfrac{d_{2}b_{1}}{\beta} 
\biggr\}^{2}+4b_{1}\biggl\{
rc_{2}v^{2}-\biggl(ra_{2}+\dfrac{d_{2}c_{1}}{\beta }\biggr)v
+\dfrac{d_{2}a_{1}}{\beta }\biggr\}}\Biggr),
\end{split}
\nonumber
\end{equation}
where $r:=\alpha/\beta$ and
$\widetilde{v}_{0}(\alpha )=0$ or
$$
\widetilde{v}_{0}(\alpha )=
\dfrac{1}{2c_{2}}\Biggl\{
a_{2}+\dfrac{d_{2}c_{1}}{\alpha}+
\sqrt{\biggl(a_{2}+\dfrac{d_{2}c_{1}}{\alpha}\biggr)^{2}-
\dfrac{4d_{2}a_{1}c_{2}}{\alpha }}\Biggr\}.
$$
Hence these expressions ensure a desired positive constant
$C_{2}=C_{2}(\eta, d_{i}, a_{i}, b_{i}, c_{i})$ satisfying
\eqref{C2}
if $\alpha$,
$\beta>\eta$ and
$\eta\le r\le1/\eta$
(recall \eqref{condi2}).
Consequently, we obtain a positive constant
$C=C(\eta,d_{1},a_{i},b_{i},c_{i})$ such that
$\|u\|_{\infty}\le C$ for any solution $(u,v)$ of \eqref{SKT}.
Obviously, by the same argument replacing
the first equation by the second one in \eqref{SKT},
we can also obtain the desired estimate
of $\|v\|_{\infty}$ for any solution $(u,v)$ of \eqref{SKT}.
Then we complete the proof of Theorem \ref{Linfthm}.
\end{proof}

\section{Full cross-diffusion limit}
In this section, we study the asymptotic behavior of 
nonconstant solutions of \eqref{SKT} as 
$\alpha\to\infty$ and
$\beta\to\infty$ with 
$\alpha/\beta\to \gamma>0$
to prove Theorem \ref{limthm}.
In the proof, the following lemma by Lou and Ni
\cite[Lemma 2.4]{LN2} will be used.
\begin{lem}[\cite{LN2}]\label{avoidlem}
Suppose that $a_{1}/a_{2}\neq b_{1}/b_{2}$,
$a_{1}/a_{2}\neq c_{1}/c_{2}$ and
$\{(u_{n},v_{n})\}$ are positive solutions
of \eqref{SKT} with 
$(d_{1}, d_{2}, \alpha, \beta)=(
d_{1,n}, d_{2,n}, \alpha_{n}, \beta_{n})$.
Assume that $(u_{n},v_{n})\to (u^{*}, v^{*})$
uniformly in $\overline{\Omega}$ as $n\to\infty$
for some nonnegative constants $u^{*}$ and $v^{*}$.
Then, either
$$
\dfrac{b_{2}}{b_{1}}>\dfrac{a_{2}}{a_{1}}>\dfrac{c_{2}}{c_{1}}
\qquad\mbox{or}\qquad
\dfrac{b_{2}}{b_{1}}<\dfrac{a_{2}}{a_{1}}<\dfrac{c_{2}}{c_{1}},
$$
moreover,
$(u^{*},v^{*})$ is the unique root of 
$a_{1}-b_{1}u-c_{1}v=a_{2}-b_{2}u-c_{2}v=0$.
\end{lem}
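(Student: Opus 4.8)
The plan is to extract two kinds of scalar integral identities from \eqref{SKT}, pass to the limit, and then run a short case analysis on the possible constant limits. First I would integrate each equation of \eqref{SKT} over $\Omega$: since $\partial_\nu[(d_{1,n}+\alpha_n v_n)u_n]=(d_{1,n}+\alpha_n v_n)\partial_\nu u_n+\alpha_n u_n\partial_\nu v_n=0$ on $\partial\Omega$ by the Neumann conditions, the divergence theorem kills the diffusion terms and leaves $\int_\Omega f(u_n,v_n)=\int_\Omega g(u_n,v_n)=0$. Letting $n\to\infty$ and using the uniform convergence $(u_n,v_n)\to(u^*,v^*)$ together with \eqref{fgdef} gives $u^*(a_1-b_1u^*-c_1v^*)=0$ and $v^*(a_2-b_2u^*-c_2v^*)=0$, so $(u^*,v^*)$ is one of the four constant equilibria $(0,0)$, $(a_1/b_1,0)$, $(0,a_2/c_2)$, or the interior root of $a_1-b_1u-c_1v=a_2-b_2u-c_2v=0$.

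Second, I would derive flux-weighted inequalities to break the ties. Writing $\phi_n:=(d_{1,n}+\alpha_n v_n)u_n>0$, the first equation reads $\Delta\phi_n+f(u_n,v_n)=0$; dividing by $\phi_n$, integrating, and integrating by parts (the boundary term vanishes as above, and $-\int_\Omega\nabla\phi_n\cdot\nabla(1/\phi_n)=\int_\Omega|\nabla\phi_n|^2/\phi_n^2\ge0$) yields $\int_\Omega (a_1-b_1u_n-c_1v_n)/(d_{1,n}+\alpha_n v_n)\le0$, and symmetrically $\int_\Omega(a_2-b_2u_n-c_2v_n)/(d_{2,n}+\beta_n u_n)\le0$. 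Crucially these hold regardless of the behavior of the coefficient sequences, since only positivity of the denominators is used.

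Third, I would rule out the three boundary equilibria. At $(0,0)$ the numerator $a_1-b_1u_n-c_1v_n\to a_1>0$ uniformly, so the first flux-weighted integral is eventually positive, contradicting $\le0$. At $(0,a_2/c_2)$ set $A_1:=a_1-c_1a_2/c_2$, which is nonzero by $a_1/a_2\neq c_1/c_2$; if $A_1>0$ the first flux-weighted inequality is again contradicted, while if $A_1<0$ the factor $a_1-b_1u_n-c_1v_n$ is eventually negative, forcing $\int_\Omega u_n(a_1-b_1u_n-c_1v_n)<0$ against the mass balance. The state $(a_1/b_1,0)$ is excluded in the same way, using $A_2:=a_2-b_2a_1/b_1\neq0$ (by $a_1/a_2\neq b_1/b_2$), the second flux-weighted inequality when $A_2>0$, and $\int_\Omega v_n(a_2-b_2u_n-c_2v_n)=0$ when $A_2<0$. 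Hence $u^*,v^*>0$ and $(u^*,v^*)$ solves the linear system $b_iu^*+c_iv^*=a_i$.

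Finally, the inequality chains follow from Cramer's rule. The determinant $D:=b_1c_2-b_2c_1$ is nonzero (if $D=0$ the two lines are parallel, and coincidence would force $a_1/a_2=b_1/b_2$, excluded), so the root is unique and equals $(u^*,v^*)$. Positivity of $u^*=(a_1c_2-a_2c_1)/D$ and $v^*=(a_2b_1-a_1b_2)/D$ forces either $D>0$ with $b_2/b_1<a_2/a_1<c_2/c_1$, or $D<0$ with $b_2/b_1>a_2/a_1>c_2/c_1$, which are exactly the two stated alternatives. I expect the third step --- eliminating the \emph{stable} semi-trivial limit, where the invasion factor $A_2$ (or $A_1$) is negative and the flux-weighted inequality alone is silent --- to be the main obstacle; the resolution is to pair that inequality with the raw mass balance $\int_\Omega g(u_n,v_n)=0$, whose integrand then carries a strict sign.
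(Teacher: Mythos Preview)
The paper does not supply its own proof of this lemma; it is quoted from Lou--Ni \cite{LN2} (their Lemma~2.4) and used as a black box in the proof of Theorem~\ref{limthm}. Your argument is correct and is essentially the Lou--Ni proof: the mass-balance identities $\int_\Omega f(u_n,v_n)=\int_\Omega g(u_n,v_n)=0$ pin the constant limit at one of the four equilibria, and the flux-weighted inequalities obtained by testing $\Delta\phi_n+f=0$ (respectively $\Delta\psi_n+g=0$) against $1/\phi_n$ (respectively $1/\psi_n$), paired with the raw mass balances for the ``stable'' semi-trivial cases, eliminate the three boundary ones. The closing Cramer's-rule step is routine. One cosmetic point: in the $A_1<0$ (respectively $A_2<0$) subcase you silently use that $u_n>0$ (respectively $v_n>0$) on a set of full measure so that $\int_\Omega u_n(a_1-b_1u_n-c_1v_n)$ is \emph{strictly} negative; this is of course part of the hypothesis ``positive solutions'', but it is worth saying explicitly.
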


\begin{proof}[Proof of Theorem \ref{limthm}]
Suppose that $\{(\alpha_{n}, \beta_{n})\}$ is any 
positive sequence satisfying $\alpha_{n}\to\infty$,
$\beta_{n}\to\infty$ and
$\gamma_{n}:=\alpha_{n}/\beta_{n}\to\gamma$
with some positive number $\gamma$.
Let $\{(u_{n}, v_{n})\}$ be positive
solutions of \eqref{SKT} with $(\alpha, \beta )=(\alpha_{n},
\beta_{n})$.
Multiplying the second equation of \eqref{SKT} by $\gamma_{n}$ 
and subtracting the resulting expression from the first equation,
we see that
\begin{equation}\label{wdef}
w_{n}(x):=d_{1}u_{n}(x)-\gamma_{n}d_{2}v_{n}(x)
\end{equation}
satisfies
\begin{equation}\label{weq}
-\Delta w_{n}=f(u_{n}, v_{n})-\gamma_{n}g(u_{n}, v_{n})
\quad\mbox{in}\ \Omega,\qquad
\partial_{\nu}w_{n}=0\quad\mbox{on}\ \partial\Omega.
\end{equation}
In view of the diffusion part of the first equation of \eqref{SKT},
we set
\begin{equation}\label{zdef}
z_{n}(x):=\dfrac{d_{1}}{\alpha_{n}}u_{n}(x)+u_{n}(x)v_{n}(x),
\end{equation}
which satisfies 
\begin{equation}\label{zeq}
-\Delta z_{n}=\dfrac{1}{\alpha_{n}}f(u_{n}, v_{n})
\quad\mbox{in}\ \Omega,\qquad
\partial_{\nu}z_{n}=0
\quad\mbox{on}\ \partial\Omega.
\end{equation}
It is possible to check that
the correspondence of $(u_{n}, v_{n})$ to $(w_{n}, z_{n})$ 
defined by \eqref{wdef} and \eqref{zdef} is one-to-one,
and more precisely,
$(u_{n}, v_{n})$ is expressed as
\begin{equation}\label{uvexp}
\begin{cases}
u_{n}=
\dfrac{1}{2d_{1}}\biggl(
\sqrt{\bigl(w_{n}-\dfrac{d_{1}d_{2}}{\beta_{n}}\bigr)^{2}
+4\gamma_{n}d_{1}d_{2}z_{n}}+w_{n}-\dfrac{d_{1}d_{2}}{\beta_{n}}\biggr),
\vspace{1mm}\\
v_{n}=\dfrac{1}{2\gamma_{n}d_{2}}\biggl(
\sqrt{\bigl(w_{n}+\dfrac{d_{1}d_{2}}{\beta_{n}}\bigr)^{2}
+4\gamma_{n}d_{1}d_{2}z_{n}-\dfrac{4d_{1}d_{2}w_{n}}{\beta_{n}}}-
\bigl(w_{n}+\dfrac{d_{1}d_{2}}{\beta_{n}}\bigr)\biggr).
\end{cases}
\end{equation}
Owing to Theorem \ref{Linfthm}, there exists a positive 
constant $C_{3}=C_{3}(\eta, d_{i}, a_{i}, b_{i}, c_{i})$ such that
$$
\|f(u_{n}, v_{n})\|_{\infty},\ \
\|g(u_{n}, v_{n})\|_{\infty},\ \
\|w_{n}\|_{\infty},\ \
\|z_{n}\|_{\infty}\le C_{3}
$$
for all $n\in\mathbb{N}$.
By applying the elliptic regularity theory (e.g.,\cite{GT})
to \eqref{weq} and \eqref{zeq}, for any $p>1$, we find a a positive 
constant $C_{4}=C_{4}(\eta, d_{i}, a_{i}, b_{i}, c_{i}, p)$ such that
$$
\|w_{n}\|_{W^{2,p}},\ \
\|z_{n}\|_{W^{2,p}}\le C_{4}
$$
for all $n\in\mathbb{N}$.
Therefore, the Sobolev embedding theorem and
the elliptic regularity theory ensure 
$w$,
$z\in W^{2,p}(\Omega )\cap C^{1}(\overline{\Omega })$
for sufficiently large $p>1$
such that
\begin{equation}\label{wzlim}
\lim_{n\to\infty }(w_{n}, z_{n})=
(w,z)\quad\mbox{strongly in}\ 
C^{1}(\overline{\Omega })\times C^{1}(\overline{\Omega })\quad
\mbox{and}\quad
\mbox{weakly in}\ W^{2,p}(\Omega )\times W^{2,p}(\Omega ),
\end{equation}
passing a subsequence if necessary.
Since $\{\|f(u_{n},v_{n})\|_{\infty}\}$ is uniformly bounded 
with respect to $n\in\mathbb{N}$, then setting $n\to\infty$
in \eqref{zeq} implies that
$z(x)$ is a harmonic function in $\Omega$
with $\partial_{\nu }z=0$ on $\partial\Omega$,
and therefore,
$z(x)=\tau $ in $\Omega$ 
with some nonnegative constant $\tau$.
Simultaneously, \eqref{zdef} with $\|u_{n}\|_{\infty}\le C$
leads to
\begin{equation}\label{uvt}
\lim_{n\to\infty}u_{n}v_{n}=\tau\quad
\mbox{uniformly in}\ \overline{\Omega }.
\end{equation}

In a case when $\tau >0$, one can deduce from \eqref{uvexp}
and \eqref{wzlim} that
\begin{equation}\label{unvnlim}
\lim_{n\to\infty}
(u_{n},v_{n})=\biggl(
\dfrac{\sqrt{w^{2}+4\gamma d_{1}d_{2}\tau}+w}{2d_{1}},
\dfrac{\sqrt{w^{2}+4\gamma d_{1}d_{2}\tau}-w}{2\gamma d_{2}}
\biggr)\quad\mbox{in}\ C^{1}(\overline{\Omega})\times
C^{1}(\overline{\Omega }).
\end{equation}
Therefore, together with \eqref{wzlim},
we set $n\to\infty $ in \eqref{weq} to verify that $w\in W^{2,p}(\Omega )$ satisfies
\begin{equation}\label{wlimeq}
\begin{cases}
-\Delta w=
f\biggl(\dfrac{\sqrt{w^{2}+4\gamma d_{1}d_{2}\tau}+w}{2d_{1}},
\dfrac{\sqrt{w^{2}+4\gamma d_{1}d_{2}\tau}-w}{2\gamma d_{2}}\biggr)\\
-\gamma
g\biggl(\dfrac{\sqrt{w^{2}+4\gamma d_{1}d_{2}\tau}+w}{2d_{1}},
\dfrac{\sqrt{w^{2}+4\gamma d_{1}d_{2}\tau}-w}{2\gamma d_{2}}\biggr)
\quad&\mbox{in}\ \Omega,\\
\partial_{\nu}w=0\quad&\mbox{on}\ \partial\Omega.
\end{cases}
\end{equation}
In this case, the Schauder estimate for elliptic equations ensures 
that $w$ is a classical solution of \eqref{wlimeq}.
Since $\tau >0$, then \eqref{uvt} and \eqref{unvnlim} imply
$u(x):=\lim_{n\to\infty}u_{n}(x)>0$
and
$v(x):=\lim_{n\to\infty}v_{n}(x)>0$
for all $x\in\overline{\Omega}$.
Integrating \eqref{zeq} over $\Omega$, we obtain
$\int_{\Omega}f(u_{n},v_{v})=0$
for $n\in\mathbb{N}$.
By \eqref{unvnlim},
the Lebesgue convergence theorem ensures that
$$
\displaystyle\int_{\Omega }f\biggl(\dfrac{\sqrt{w^{2}+4\gamma d_{1}d_{2}\tau}+w}{2d_{1}},
\dfrac{\sqrt{w^{2}+4\gamma d_{1}d_{2}\tau}-w}{2\gamma d_{2}}\biggr)=0.
$$

In the other case when $\tau =0$,
\eqref{uvexp}
and \eqref{wzlim} ensure
\begin{equation}\label{unvnlim2}
\lim_{n\to\infty}
(u_{n},v_{n})=\biggl(
\dfrac{|w|+w}{2d_{1}},
\dfrac{|w|-w}{2\gamma d_{2}}\biggr)
=\biggl(\dfrac{w_{+}}{d_{1}},
\dfrac{w_{-}}{\gamma d_{2}}
\biggr)\quad\mbox{uniformly in}\ 
\overline{\Omega },
\end{equation}
where $w_{+}:=\max\{w,0\}$ and $w_{-}:=-\min\{w,0\}\ge 0$.
Setting $n\to\infty$ in \eqref{weq}, 
we know that $w\in W^{2,p}(\Omega )\cap C^{1}(\overline{\Omega })$
satisfies
\begin{equation}
%\label{wlimeq2}
\begin{cases}
-\Delta w=
f\biggl(\dfrac{w_{+}}{d_{1}},
\dfrac{w_{-}}{\gamma d_{2}}
\biggr)
-\gamma
g\biggl(\dfrac{w_{+}}{d_{1}},
\dfrac{w_{-}}{\gamma d_{2}}
\biggr)
\quad&\mbox{in}\ \Omega,\\
\partial_{\nu}w=0\quad&\mbox{on}\ \partial\Omega.
\end{cases}
\nonumber
\end{equation}
To accomplish the proof of the second
limiting case (ii) stated in Theorem \ref{limthm},
it remains to prove that 
$u(x):=\lim_{n\to\infty}u_{n}(x)$ and $v(x):=\lim_{n\to\infty}v_{n}(x)$
(obtained by \eqref{unvnlim2})
are not constant.
Suppose for contradiction that $u$
(resp. $v$) 
is a positive constant.
Since $uv=0$ in $\Omega$ by \eqref{uvt},
one can see that $v=0$
(resp. $u=0$) 
in $\Omega $.
%Then \eqref{unvnlim2} implies
%$(u,v)=(w/d_{1}, 0)$, and therefore, \eqref{wlimeq2}
%is reduced to
%$$
%-d_{1}\Delta u=u(a_{1}-b_{1}u)\quad\mbox{in}\ \Omega,
%\qquad \partial_{\nu}u=0\quad\mbox{on}\ \partial\Omega.
%$$ 
%Since $u$ is a positive constant, then $u=a_{1}/b_{1}$.
Namely, \eqref{unvnlim2} implies that
$(u_{n},v_{n})\to (u, 0)$
(resp. $(u_{n},v_{n})\to (0, v)$)
uniformly in $\overline{\Omega}$.
This contradicts Lemma \ref{avoidlem}.
Obviously, $(u,v)=(0,0)$
also contradicts Lemma \ref{avoidlem}.
Consequently, we deduce that $w_{+}$ and $w_{-}$ are not 
identically zero in $\Omega$, in other words,
$w$ is sign-changing.
Then we complete the proof of Theorem \ref{limthm}.
\end{proof}

\section{Existence of nonconstant solutions of limiting systems}
In this section,
as a beginning of study for the limiting system
\eqref{IS} of incomplete segregation,
an existence result of nonconstant solutions will be shown.
In order to state the result,
we note that \eqref{SKT} admits a unique
positive constant solution
\begin{equation}\label{const}
(u^{*},v^{*}):=
\dfrac{1}{b_{2}c_{1}-b_{1}c_{2}}
(a_{2}c_{1}-a_{1}c_{2}, a_{1}b_{2}-a_{2}b_{1})
\end{equation}
in the weak competition case 
$c_{1}/c_{2}<a_{1}/a_{2}<b_{1}/b_{2}$
or the strong competition case 
$b_{1}/b_{2}<a_{1}/a_{2}<c_{1}/c_{2}$,
and therefore,
\eqref{IS}
with
\begin{equation}\label{taus}
\tau=\tau^{*}:=u^{*}v^{*}
\end{equation}
admits a constant solution
$w^{*}:=d_{1}u^{*}-\gamma d_{2}v^{*}$.
In our analysis for \eqref{IS} based on a framework of 
the bifurcation theory,
$w$ and $\tau$ will be regarded as unknowns,
$d_{1}$ will play a role in a bifurcation parameter,
and any other coefficients will be fixed as far as
the weak or the strong competition case.
The next result gives a local curve of 
nonconstant solutions of \eqref{IS},
which bifurcate from $(w^{*}, \tau^{*})$ when the bifurcation parameter $d_{1}$
passes a threshold number.
In what follows, 
all eigenvalues
of $-\Delta$ with homogeneous Neumann boundary condition
on $\partial\Omega$ will be denoted by
$0=\lambda_{0}<\lambda_{1}\le\lambda_{2}\le
\cdots\le\lambda_{j}\le\cdots$
(counting multiplicity).
\begin{thm}\label{bifthm}
Suppose that $c_{1}/c_{2}<a_{1}/a_{2}<b_{1}/b_{2}$ or
$b_{1}/b_{2}<a_{1}/a_{2}<c_{1}/c_{2}$.
Furthermore, assume that
$\lambda_{j}$ is a positive eigenvalue whose eigenspace
is one-dimension.
There exists a small $\eta_{j} >0$ such that
if $0<b_{1},\,c_{2},\,d_{2}<\eta_{j}$,
then there exists $\delta_{j}>0$ such that
nonconstant solutions of \eqref{IS}
bifurcate from the branch of positive constant solutions
$$
\{(d_{1},w, \tau)\,:\,d_{1}>0,\,w=d_{1}u^{*}-\gamma d_{2}v^{*}\,(=:w^{*}(d_{1})),\,\tau=\tau^{*}\}
$$
when $d_{1}$ passes $\delta_{j}$.
More precisely,
in a neighbourhood of
$(d_{1},w,\tau)=(\delta_{j},
w^{*}(\delta_{j}), \tau^{*})\in\mathbb{R}\times
W^{2,p}_{\nu}(\Omega )\times\mathbb{R}$,
the set of nonconstant solutions of 
\eqref{IS} 
form a curve represented by
\begin{equation}\label{bifexp}
(d_{1},w, \tau)=(d_{1}(s), w^{*}(d_{1}(s))+s(\varPhi_{j}+\psi (\,\cdot\,,s)),
\tau(s))
\quad\mbox{for}\ s\in [-\sigma, \sigma]
\end{equation}
with some small $\sigma >0$,
where
$d_{1}(s)$,
$\tau (s)\in\mathbb{R}_{+}$
are of $C^{1}$ class
satisfying
$d_{1}(0)=\delta_{j}$,
$\tau (0)=\tau^{*}$,
$\tau'(0)=0$,
and
$\varPhi_{j}$ is an eigenfunction satisfying
\begin{equation}\label{eg}
-\Delta\varPhi_{j}=\lambda_{j}\varPhi_{j}\quad\mbox{in}\ \Omega,
\quad \partial_{\nu}\varPhi_{j}=0\quad\mbox{on}\
\partial\Omega,\quad
\|\varPhi_{j}\|_{2}=1,
\end{equation}
and
$\psi (\,\cdot\,,s)\in W^{2,p}_{\nu}(\Omega )$ satisfies
$\psi (\,\cdot\,,0)=0$ and 
$\int_{\Omega}\psi(x,s)=\int_{\Omega }\varPhi_{j}(x)\psi (x,s)=0$
for any $|s|\le\sigma$.
\end{thm}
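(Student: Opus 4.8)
The plan is to apply the Crandall--Rabinowitz bifurcation theorem from a simple eigenvalue to the limiting system \eqref{IS}, treating $(w,\tau)$ as the unknown pair and $d_{1}$ as the bifurcation parameter. First I would recast \eqref{IS} as an abstract equation $\mathcal{F}(d_{1},w,\tau)=0$ in $\mathbb{R}\times W^{2,p}_{\nu}(\Omega)\times\mathbb{R}$, where the first component of $\mathcal{F}$ is the elliptic PDE together with the Neumann condition (living in $L^{p}(\Omega)$) and the second component is the scalar integral constraint $\int_{\Omega}f(\cdot,\cdot)=0$. Here one must keep in mind that the arguments of $f$ and $g$ in \eqref{IS} are the explicit functions $U(w,\tau):=(\sqrt{w^{2}+4\gamma d_{1}d_{2}\tau}+w)/(2d_{1})$ and $V(w,\tau):=(\sqrt{w^{2}+4\gamma d_{1}d_{2}\tau}-w)/(2\gamma d_{2})$, which are smooth in $(w,\tau)$ as long as $w^{2}+4\gamma d_{1}d_{2}\tau>0$; near the constant solution $(w^{*},\tau^{*})$ with $\tau^{*}>0$ this positivity holds, so $\mathcal{F}$ is a $C^{1}$ (indeed smooth) map between the relevant Banach spaces, and the trivial branch $\{(d_{1},w^{*}(d_{1}),\tau^{*})\}$ lies on $\mathcal{F}=0$ by \eqref{const}--\eqref{taus}.

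Next I would compute the linearization $D_{(w,\tau)}\mathcal{F}$ along the trivial branch. Differentiating the PDE component gives an operator of the form $\Delta + q(d_{1})$ acting on the $w$-perturbation plus a rank-one term in the $\tau$-perturbation, where $q(d_{1})$ is the partial derivative of the nonlinearity $f(U,V)-\gamma g(U,V)$ with respect to $w$ evaluated at $(w^{*},\tau^{*})$; the constraint component contributes $\int_{\Omega}(\partial_{w}f(U,V))\varphi + (\partial_{\tau}(\cdots))\,\delta\tau$. Choosing $d_{1}=\delta_{j}$ so that $q(\delta_{j})=\lambda_{j}$ makes $-\Delta-q$ have kernel spanned by the eigenfunction $\varPhi_{j}$; the smallness hypothesis $0<b_{1},c_{2},d_{2}<\eta_{j}$ is what lets one solve $q(d_{1})=\lambda_{j}$ for a genuine positive root $\delta_{j}$ and, simultaneously, ensures that the $\tau$-direction and the higher Fourier modes are non-degenerate, so that the kernel of the full linearized operator $D_{(w,\tau)}\mathcal{F}$ is exactly one-dimensional, spanned by $(\varPhi_{j},0)$ (here the hypothesis $\tau'(0)=0$ in the statement already signals that the bifurcating direction has no $\tau$-component). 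The codimension-one range condition and the transversality condition $D_{d_{1}}D_{(w,\tau)}\mathcal{F}[\varPhi_{j},0]\notin\mathrm{Range}$ then follow from the one-dimensionality of the $\lambda_{j}$-eigenspace together with $q'(\delta_{j})\neq 0$, which is again guaranteed by the smallness of the parameters.

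With these Fredholm and transversality properties verified, the Crandall--Rabinowitz theorem yields a $C^{1}$ curve of nontrivial solutions through $(\delta_{j},w^{*}(\delta_{j}),\tau^{*})$ of exactly the form \eqref{bifexp}, with $d_{1}(0)=\delta_{j}$, $\tau(0)=\tau^{*}$, and $\psi(\cdot,s)$ in the complement of $\mathrm{span}\{\varPhi_{j}\}$ normalized by $\int_{\Omega}\psi = \int_{\Omega}\varPhi_{j}\psi=0$; the fact that $\tau'(0)=0$ comes from projecting the bifurcation equation onto the $\tau$-component and using that the kernel direction is $(\varPhi_{j},0)$. Finally, since $\varPhi_{j}$ is a nonconstant eigenfunction, the bifurcating solutions $w^{*}(d_{1}(s))+s(\varPhi_{j}+\psi(\cdot,s))$ are nonconstant for small $s\neq 0$, and via the identification \eqref{uvexp}-type formulas the corresponding $(u,v)$ are nonconstant as well. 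The main obstacle I anticipate is the kernel-dimension computation: one has to rule out that any higher eigenvalue $\lambda_{k}$ coincides with $q(\delta_{j})$ and that the scalar $\tau$-equation does not introduce an extra kernel element — this is precisely where the structural smallness assumption $0<b_{1},c_{2},d_{2}<\eta_{j}$ must be exploited quantitatively, by showing that as $b_{1},c_{2},d_{2}\to 0$ the function $d_{1}\mapsto q(d_{1})$ behaves in a controlled, monotone way and the $\tau$-coupling coefficient stays bounded away from the degenerate value.
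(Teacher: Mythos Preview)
Your proposal is correct and follows the same Crandall--Rabinowitz strategy as the paper. The paper's one technical refinement is to shift to $\phi=w-w^{*}(d_{1})$ and restrict $\phi$ to the mean-zero subspace $X=\{\phi\in W^{2,p}_{\nu}(\Omega):\int_{\Omega}\phi=0\}$; this forces the off-diagonal entry $L_{21}(d_{1})\phi=(f_{u}^{*}u_{w}^{*}+f_{v}^{*}v_{w}^{*})\int_{\Omega}\phi$ to vanish identically, and since $L_{22}(d_{1})<0$ holds unconditionally (no smallness required), the one-dimensionality $\mathrm{Ker}\,L(\delta_{j})=\mathrm{span}\{(\varPhi_{j},0)\}$ is immediate---so the $\tau$-coupling obstacle you anticipate never actually arises, and the smallness of $b_{1},c_{2},d_{2}$ is used only to guarantee that the scalar potential $q(d_{1})$ attains the value $\lambda_{j}$ at some positive $\delta_{j}$.
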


\begin{proof}
Suppose that $c_{1}/c_{2}<a_{1}/a_{2}<b_{1}/b_{2}$
or $b_{1}/b_{2}<a_{1}/a_{2}<c_{1}/c_{2}$.
Our aim is to construct a local curve 
$\{(w,\tau )\}$ of nonconstant
solutions of
\begin{equation}\label{ISS}
\begin{cases}
\Delta w+f(u(w,\tau ), v(w,\tau ))-\gamma
g(u(w,\tau ), v(w,\tau ))=0
\quad &\mbox{in}\ \Omega,\\
\partial_{\nu }w=0
\quad &\mbox{on}\ \partial\Omega,\\
\displaystyle\int_{\Omega}
f(u(w, \tau), v(w, \tau ))=0,
\end{cases}
\end{equation}
where 
$$u(w,\tau )=
\dfrac{\sqrt{w^{2}+4\gamma d_{1}d_{2}\tau}+w}{2d_{1}}
\quad\mbox{and}\quad
v(w, \tau )
=\dfrac{\tau}{u(w,\tau )}=
\dfrac{\sqrt{w^{2}+4\gamma d_{1}d_{2}\tau}-w}{2\gamma d_{2}}.
$$
Since $(w^{*}(d_{1}),\tau^{*})$
is a positive constant solution of \eqref{ISS}
for any $d_{1}>0$,
we shift $(w^{*}(d_{1}),\tau^{*})$ to the origin
by the change of variables
\begin{equation}\label{trans}
\phi :=w-w^{*}(d_{1})
\quad\mbox{and}\quad\xi=\tau-\tau^{*}.
\end{equation}
Hereafter we shall construct the solution curve so that
$\phi$ lies in the Banach space
$
X:=\{\phi\in W^{2,p}_{\nu }(\Omega )\,:\,
\int_{\Omega}\phi =0\}$.
To this end, we define an operator $\mathcal{F}(d_{1},\phi,\xi)\,:\,\mathbb{R}\times X\times\mathbb{R}
\to L^{p}(\Omega )\times\mathbb{R}$ associated with \eqref{IS} by
$$
\mathcal{F}(d_{1},\phi,\xi)=\biggl[
\begin{array}{c}
\mathcal{F}^{(1)}(d_{1},\phi,\xi)\\
\mathcal{F}^{(2)}(d_{1},\phi,\xi)
\end{array}
\biggr],
$$
where 
\begin{equation}\label{F1}
\begin{split}
\mathcal{F}^{(1)}(d_{1},\phi,\xi):=&
\Delta\phi+f(u(w^{*}(d_{1})+\phi, \tau^{*}+\xi),
v(w^{*}(d_{1})+\phi, \tau^{*}+\xi))\\
&
-\gamma g(u(w^{*}(d_{1})+\phi, \tau^{*}+\xi),
v(w^{*}(d_{1})+\phi, \tau^{*}+\xi))
\end{split}
\end{equation}
and
\begin{equation}\label{F2}
    \mathcal{F}^{(2)}(d_{1},\phi,\xi):=\int_{\Omega}
    f(u(w^{*}(d_{1})+\phi, \tau^{*}+\xi),
v(w^{*}(d_{1})+\phi, \tau^{*}+\xi)).
\end{equation}
In order to find bifurcation points of nonconstant solutions
of $\mathcal{F}(d_{1},\phi,\xi)=0$ on the trivial solution branch
$\{(d_{1},0,0)\,:\,d_{1}>0\}$,
we first seek for degenerate points of
the linearized operator of $\mathcal{F}$ around 
$(\phi,\xi)=(0,0)$, which will be denoted by
$$L(d_{1}):=\mathcal{F}_{(\phi,\xi)}(d_{1},0,0)\in\mathcal{L}(X\times\mathbb{R},
L^{p}(\Omega )\times\mathbb{R}),$$ that is,
\begin{equation}\label{Ldef}
L(d_{1})=
\biggl[
\begin{array}{cc}
L_{11}(d_{1}) & L_{12}(d_{1}) \\
L_{21}(d_{1}) & L_{22}(d_{1})
\end{array}
\biggr]:=\biggl[
\begin{array}{cc}
\mathcal{F}^{(1)}_{\phi}(d_{1},0,0) & \mathcal{F}^{(1)}_{\xi}(d_{1}, 0, 0)\\
\mathcal{F}^{(2)}_{\phi}(d_{1},0,0) & \mathcal{F}^{(2)}_{\xi}(d_1, 0, 0)
\end{array}
\biggr].
\end{equation}
Since 
$f(u^{*},v^{*})=g(u^{*},v^{*})=0$ and
$$
(w^{*})^{2}+4\gamma d_{1}d_{2}\tau^{*}=
(d_{1}u^{*}-\gamma d_{2}v^{*})^{2}+4\gamma d_{1}d_{2}v^{*}=
(d_{1}u^{*}+\gamma d_{2}v^{*})^{2},
$$
then a straightforward computation yields
\begin{equation}\label{jacobi}
\biggl[
\begin{array}{cc}
f_{u}^{*} & f_{v}^{*}\\
g_{u}^{*} & g_{v}^{*}
\end{array}
\biggr]
=
-
\biggl[
\begin{array}{cc}
b_{1}u^{*} & c_{1}u^{*}\\
b_{2}v^{*} & c_{2}v^{*}
\end{array}
\biggr]
\ \mbox{and}\ 
\biggl[
\begin{array}{cc}
u^{*}_{w} & u^{*}_{\tau}\\
v^{*}_{w} & v^{*}_{\tau}
\end{array}
\biggr]
=
\dfrac{1}{d_{1}u^{*}+\gamma d_{2}v^{*}}
\biggl[
\begin{array}{cc}
u^{*} & 1/(4d_{1})\\
-v^{*} & 1/(4\gamma d_{2})
\end{array}
\biggr],
\end{equation}
where $f^{*}_{u}:=f_{u}(u^{*},v^{*})$,
$u_{w}^{*}:=u_{w}(w^{*}(d_{1}),\tau^{*})$
and other notations are defined by the same manner.
It follows from \eqref{F1}-\eqref{jacobi} that
orthogonal entries of $L(d_{1})$ are computed as follows
\begin{equation}\label{L11}
\begin{split}
     L_{11}(d_{1})=&
\Delta +
f_{u}^{*}u^{*}_{w}+f_{v}^{*}v^{*}_{w}
-\gamma
(g_{u}^{*}u^{*}_{w}+g_{v}^{*}v_{w}^{*})\\
=&\Delta
+\dfrac{(c_{1}+\gamma b_{2})\tau^{*}-b_{1}(u^{*})^{2}-\gamma c_{2}(v^{*})^{2}}
{d_{1}u^{*}+\gamma d_{2}v^{*}}.
\end{split}
\end{equation}
and
\begin{equation}\label{L22}
    L_{22}(d_{1})=(f^{*}_{u}u^{*}_{\tau}+f^{*}_{v}v^{*}_{\tau })|\Omega |
    =-\dfrac{u^{*}|\Omega|}{4(d_{1}u^{*}+\gamma d_{2}v^{*})}
    \biggl(\dfrac{b_{1}}{d_{1}}+\dfrac{c_{1}}{\gamma d_{2}}\biggr).
\end{equation}
Here we remark that
$$
L_{21}(d_{1})\phi=
(f^{*}_{u}u_{w}^{*}+f^{*}_{v}v^{*}_{w})\int_{\Omega}\phi=0
\quad\mbox{for any}\ \phi\in X,
$$
that is to say,
$L_{21}(d_{1})=0$ for any $d_{1}>0$.
Since \eqref{L22} implies that $L_{22}(d_{1})<0$ for any $d_{1}>0$,
we have only to investigate the degeneracy of 
$L_{11}(d_{1})\in\mathcal{L}(X,L^{p}(\Omega ))$.
In view of the potential term of $L_{11}(d_{1})$ in \eqref{L11},
we use \eqref{const} and \eqref{taus} to see that
$$
\lim_{b_{1}\to +0,\,
c_{2}\to +0}
\dfrac{(c_{1}+\gamma b_{2})\tau^{*}-b_{1}(u^{*})^{2}-\gamma c_{2}(v^{*})^{2}}
{d_{1}u^{*}+\gamma d_{2}v^{*}}
=
\dfrac{(\gamma b_{2}+c_{1})a_{1}a_{2}}
{d_{1}a_{2}c_{1}+\gamma d_{2}a_{1}b_{2}}>0.
$$
Then,
for any positive eigenvalue $\lambda_{j}$
whose eigenspace is one dimension,
there exists a small positive number $\eta_{j}$
such that if
$0<b_{1},\,c_{2},\,d_{2}<\eta_{j}$, then
\begin{equation}\label{decpass}
\dfrac{(c_{1}+\gamma b_{2})\tau^{*}-b_{1}(u^{*})^{2}-\gamma c_{2}(v^{*})^{2}}
{d_{1}u^{*}+\gamma d_{2}v^{*}}
\begin{cases}
>\lambda_{j}\quad &\mbox{for}\ 0<d_{1}<\delta_{j},\\
=\lambda_{j}\quad &\mbox{for}\ d_{1}=\delta_{j},\\
<\lambda_{j}\quad &\mbox{for}\ d_{1}>\delta_{j}
\end{cases}
\end{equation}
with some $\delta_{j}>0$.
Since $L_{11}(\delta_{j})=\Delta +\lambda_{j}$,
then $\mbox{Ker}\,L_{11}(\delta_{j})=\mbox{Span}\{\varPhi_{j}\}\subset X$,
where $\varPhi_{j}$ is an eigenfunction satisfying \eqref{eg}.
It is noted that $\int_{\Omega}\varPhi_{j}=0$.
Together with $L_{21}(\delta_{j})=0$ and $L_{22}(\delta_{j})<0$,
we know that
$
\mbox{Ker}\,L(\delta_{j})=\{\,(\phi,\xi )=t(\varPhi_{j},0)\,:\,
t\in\mathbb{R}\}$.
In order to use the local bifurcation theorem \cite[Theorem 1.7]{CR},
we have to check the following transversality condition
\begin{equation}\label{trans2}
\mathcal{F}_{(\phi,\xi),d_{1}}(\delta_{j},0,0)
\biggl[\begin{array}{c}
\varPhi_{j}\\
0
\end{array}
\biggr]\not\in
\mbox{Ran}\,L(\delta_{j}).
\end{equation}
To this end, it obviously suffices to show
$ \mathcal{F}^{(1)}_{\phi,d_{1}}(\delta_{j},0,0)\varPhi_{j}\not\in \mbox{Ran}\,L_{11}(\delta_{j})$.
Suppose for contradiction that
$ \mathcal{F}^{(1)}_{\phi,d_{1}}(\delta_{j},0,0)\varPhi_{j}\in \mbox{Ran}\,L_{11}(\delta_{j})$.
It is possible to verify that
$$
\mathcal{F}^{(1)}_{\phi, d_{1}}(\delta_{j},0,0)\,\varPhi_{j}
=-
\dfrac{u^{*}\{(c_{1}+\gamma b_{2})\tau^{*}-b_{1}(u^{*})^{2}-\gamma c_{2}(v^{*})^{2}\}}{(\delta_{j}u^{*}+\gamma d_{2}v^{*})^{2}}
\varPhi_{j}
=
-\dfrac{u^{*}\lambda_{j}}{\delta_{j}u^{*}+\gamma d_{2}v^{*}}\varPhi_{j},
$$
where the last equality comes from \eqref{decpass}. 
By virtue of
the Fredholm alternative theorem,
one can see 
$
\mbox{Ran}\,L_{11}(\delta_{j})=\{\psi\in L^{p}(\Omega )\,:\,
\int_{\Omega}\psi=\int_{\Omega}\psi\varPhi_{j}\,dx=0\,\}$.
Then our assumption is equivalent to
$$
-
\dfrac{u^{*}\lambda_{j}}{\delta_{j}u^{*}+\gamma d_{2}v^{*}}
\|\varPhi_{j}\|^{2}_{2}=0.
$$
This obviously contradicts $\|\varPhi_{j}\|_{2}=1$.
Therefore, the transversality condition \eqref{trans2} holds true.
Consequently, we have checked all conditions for use of 
the local bifurcation theorem \cite[Theorem 1.7]{CR} to 
obtain the bifurcation curve of nonconstant solutions
expressed as \eqref{bifexp} by way of \eqref{trans}.
We complete the proof of Theorem \ref{bifthm}.
\end{proof}

\begin{rem}
By the bifurcation theorem by Krasnoselski \cite{Kr},
we can show that $(d_{1},w, \tau)=(\delta_{j}, w^{*}, \tau^{*})$ is still
a bifurcation point in some sense under a weaker assumption on
$\lambda_{j}$ that its multiplicity is odd.
\end{rem}

Concerning the other limiting system \eqref{CS} of complete segregation,
since it is also a fast reaction limiting system of the Lotka-Volterra
competition model with linear diffusion terms
(see e.g., \cite{DD, DHMP, DZ, HY, Ka2}).
In particular,
for the one-dimensional case,
Dancer, Hilhorst, Mimura and Peletier \cite[Theorem 4.1]{DHMP}
obtained the following detailed 
structure of nontrivial solutions of \eqref{CS}.
\begin{thm}[\cite{DHMP}]
Suppose that $\Omega =(0,1)$.
If $\sqrt{d_{1}/a_{1}}+\sqrt{d_{2}/a_{2}}\ge 2/\pi$,
there is no nonconstant solution of \eqref{CS}.
For each $n\in\mathbb{N}$,
if $\sqrt{d_{1}/a_{1}}+\sqrt{d_{2}/a_{2}}< 2/(n\pi)$, then
\eqref{CS} has $n$-time(s) sign-changing solutions
$w^{(n)}_{fg}$,
$w^{(n)}_{gf}\in C^{2}(\overline{\Omega })$ in the sense that
the number
of zeros of each is $n$.
More precisely,
$w^{(n)}_{fg}$ is expressed as
$$
w^{(n)}_{fg}(x)=\begin{cases}
\sum\limits^{[n/2]}_{j=1}\psi (x-2j/n) + \phi (x-(n-1)/n)\quad &\mbox{if $n$ is odd},\\
\sum\limits^{[n/2]}_{j=1}\psi (x-2j/n) \quad &\mbox{if $n$ is even}
\end{cases}
$$
for
$x\in\Omega$. Here,
$\phi $ is a unit part determining the profile of $w^{(n)}_{fg}$
defined by
$$
\phi(x)=\begin{cases}
d_{1}u(x)>0\quad&\mbox{if}\ x\in [0,\theta_{n}),\\
-\gamma d_{2}v(x)<0\quad&\mbox{if}\ x\in (\theta_{n},1/n],\\
0\quad &\mbox{otherwise}
\end{cases}
$$
with some $\theta_{n}\in (0,1/n)$,
and $\psi (x)=\phi (x)+\phi(2/n-x)$,
where $u(x)$ 
$(x\in [0,\theta_{n}])$
and $v(x)$ 
$(x\in [\theta_{n},1/n])$ are solutions of
$$
\begin{cases}
d_{1}u''+u(a_{1}-b_{1}u)=0,\quad u>0>u'\quad&\mbox{in}\ (0,\theta_{n}),\\
d_{2}v''+v(a_{2}-c_{2}v)=0,\quad v,\,v'>0\quad&\mbox{in}\ (\theta_{n},1/n),\\
u(\theta_{n})=v(\theta_{n})=0,\quad
d_{1}u'(\theta_{n})=-\gamma d_{2}v'(\theta_{n}),\\
u'(0)=v'(1/n)=0.
\end{cases}
$$
On the other hand, $w^{(n)}_{gf}$ is expressed by
$$
w^{(n)}_{gf}(x)=\begin{cases}
w^{(n)}_{fg}(1-x)\quad &\mbox{if $n$ is odd},\\
w^{(n)}_{fg}(x+1/n)+\phi(x-(n-1)n)\ &\mbox{if $n$ is even}.
\end{cases}
$$

\end{thm}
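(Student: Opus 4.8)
The plan is to recast \eqref{CS} on $\Omega=(0,1)$ as a free-boundary ODE problem. On any interval where $w>0$ one has $w_+=w$, $w_-=0$, so $u:=w/d_1$ solves the logistic equation $d_1u''+u(a_1-b_1u)=0$; on any interval where $w<0$, $v:=-w/(\gamma d_2)$ solves $d_2v''+v(a_2-c_2v)=0$; and since the map $w\mapsto f(w_+/d_1,w_-/(\gamma d_2))-\gamma g(w_+/d_1,w_-/(\gamma d_2))=:h(w)$ is continuous and locally Lipschitz, any solution $w$ is $C^2(\overline\Omega)$ and $C^1$-matching holds across each zero. The first preliminary step is to observe that a nonconstant solution cannot have a zero $x_0$ with $w'(x_0)=0$: Picard--Lindel\"of uniqueness for the first-order system $(w,w')'=(w',-h(w))$ would force $w\equiv0$. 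Hence $w(0)\ne0$ and $w(1)\ne0$ (a Neumann endpoint value $0$ would give $w\equiv0$), every interior zero is simple and sign-changing, and between consecutive zeros $w$ is a single-signed logistic arc. A maximum-principle argument — at an interior maximum $u\le a_1/b_1$, at an interior minimum $u\ge a_1/b_1$, hence $u\equiv a_1/b_1$ — rules out nonconstant single-signed solutions, so every nonconstant solution has at least one interior zero.

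Next I would carry out the time-map analysis for each logistic equation. With potential $V_1(u)=\tfrac{a_1}2u^2-\tfrac{b_1}3u^3$ and conserved energy $\tfrac{d_1}2(u')^2+V_1(u)$, an arc with maximal value $M\in(0,a_1/b_1)$ runs along the closed orbit of energy $V_1(M)$, and the time for $u$ to travel from $u=M$ (with $u'=0$) down to $u=0$ is
\[
T_1(M)=\int_0^M\frac{ds}{\sqrt{(2/d_1)\bigl(V_1(M)-V_1(s)\bigr)}}.
\]
From $V_1(M)-V_1(s)\le\tfrac{a_1}2(M^2-s^2)$ one obtains the \emph{strict} lower bound $T_1(M)>\tfrac\pi2\sqrt{d_1/a_1}$ for every $M$, with $T_1(M)\to\tfrac\pi2\sqrt{d_1/a_1}$ as $M\to0^+$ (linearisation at the centre) and $T_1(M)\to\infty$ as $M\to(a_1/b_1)^-$ (the orbits limit onto the homoclinic loop at the saddle $u=a_1/b_1$, where the integral diverges logarithmically); the analogous statements hold for $T_2$ built from $V_2(u)=\tfrac{a_2}2u^2-\tfrac{c_2}3u^3$ and $d_2,a_2,c_2$. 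With these in hand, \textbf{non-existence} is immediate: if $\sqrt{d_1/a_1}+\sqrt{d_2/a_2}\ge 2/\pi$, then by the previous paragraph any nonconstant solution has an interior zero, hence contains at least one positive and one negative (half-)arc whose total length exceeds $\tfrac\pi2\sqrt{d_1/a_1}+\tfrac\pi2\sqrt{d_2/a_2}\ge1=|\Omega|$, a contradiction.

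For the \textbf{existence} of $w^{(n)}_{fg},w^{(n)}_{gf}$ when $\sqrt{d_1/a_1}+\sqrt{d_2/a_2}<2/(n\pi)$, I would assemble the basic unit $\phi$ on $[0,1/n]$: a descending positive arc with top value $M_1$ on $[0,\theta_n)$ glued at $\theta_n$ to an ascending negative arc with bottom value $-\gamma d_2M_2$ on $(\theta_n,1/n]$. Energy conservation gives $|u'(\theta_n)|=\sqrt{(2/d_1)V_1(M_1)}$ and $|v'(\theta_n)|=\sqrt{(2/d_2)V_2(M_2)}$, so the $C^1$-matching $d_1|u'(\theta_n)|=\gamma d_2|v'(\theta_n)|$ becomes the transmission relation $d_1V_1(M_1)=\gamma^2d_2V_2(M_2)$; since $V_1:(0,a_1/b_1)\to(0,\tfrac{a_1^3}{6b_1^2})$ and $V_2:(0,a_2/c_2)\to(0,\tfrac{a_2^3}{6c_2^2})$ are increasing bijections, this cuts out a connected monotone curve $M_2=\Psi(M_1)$. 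Reflecting $\phi$ about $x=1/n$ produces the block $\psi$ of length $2T_1(M_1)+2T_2(M_2)$ whose two endpoints are maxima (critical points), so translates of $\psi$ concatenate in a $C^1$ fashion; placing $[n/2]$ such blocks, plus one copy of $\phi$ when $n$ is odd, on $[0,1]$, with the Neumann conditions at $0,1$ supplied by the critical endpoints, yields a genuine $C^2$ solution with exactly $n$ sign changes, and the symmetry $x\mapsto1-x$ (or the shift by $1/n$) produces the second, distinct family $w^{(n)}_{gf}$. This construction succeeds precisely when $d_1V_1(M_1)=\gamma^2d_2V_2(M_2)$ and $T_1(M_1)+T_2(M_2)=1/n$ have a common solution $(M_1,M_2)\in(0,a_1/b_1)\times(0,a_2/c_2)$. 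Parametrising along $\Psi$, the function $\Theta(M_1):=T_1(M_1)+T_2(\Psi(M_1))$ is continuous, tends to $\tfrac\pi2(\sqrt{d_1/a_1}+\sqrt{d_2/a_2})$ as $M_1\to0^+$ and to $+\infty$ as $M_1$ reaches the end of the curve (where either $M_1\to a_1/b_1$ or $M_2\to a_2/c_2$, forcing $T_1$ or $T_2$ to blow up), so by the intermediate value theorem $\Theta(M_1)=1/n$ is solvable iff $1/n>\tfrac\pi2(\sqrt{d_1/a_1}+\sqrt{d_2/a_2})$, which is exactly the asserted threshold.

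The step I expect to be the main obstacle is the sharp qualitative control of $T_1,T_2$ at the two ends of their range: the exact boundary limit $\tfrac\pi2\sqrt{d_i/a_i}$ as $M\to0^+$, and especially the divergence $T_i(M)\to\infty$ as $M$ approaches the interior equilibrium $a_i/b_i$ (the infinite-period phenomenon for orbits limiting onto the homoclinic loop, which has to be extracted from the improper integral via an explicit local expansion of $V_i$ near its critical point). Everything else is elementary phase-plane bookkeeping together with the reflection construction. If one wanted, in addition, uniqueness of $(M_1,M_2)$ — hence of each $w^{(n)}$ — one would need monotonicity of the time maps for the logistic nonlinearity, a separate and more delicate point that is not needed for the statement above.
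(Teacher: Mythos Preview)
The paper does not prove this theorem at all: it is stated as a result of Dancer--Hilhorst--Mimura--Peletier \cite{DHMP} and simply quoted, with no argument given in the present paper. There is therefore nothing in the paper to compare your proposal against.

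That said, your outline is a faithful reconstruction of the phase-plane/time-map argument that underlies the original result in \cite{DHMP}: reduction of \eqref{CS} to logistic ODEs on each nodal interval, exclusion of single-signed nonconstant solutions by the maximum principle, the strict lower bound $T_i(M)>\tfrac{\pi}{2}\sqrt{d_i/a_i}$ giving non-existence at the threshold, and the shooting/gluing construction with the energy-matching relation $d_1V_1(M_1)=\gamma^2 d_2 V_2(M_2)$ plus the intermediate-value argument on $\Theta(M_1)=T_1(M_1)+T_2(\Psi(M_1))$ for existence. Your identification of the delicate step---the endpoint behaviour of the time maps (linearised limit $\tfrac{\pi}{2}\sqrt{d_i/a_i}$ at $M\to 0^+$ and logarithmic blow-up as $M$ approaches the saddle value)---is accurate, and your handling of which end of the matching curve is reached first (either $M_1\to a_1/b_1$ or $M_2\to a_2/c_2$) is the right way to close the IVT argument. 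In short, the proposal is correct and is essentially the method of the cited source, but there is no proof in the present paper to compare it with.
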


%%%FUGURE1%%%%%%%%%%%%%%%%%%%%%%%%%%%%%%%%%%%%%%%%%%%%%%%%%%%%%%%%%%%
%\psfrag{v}[t]{{$v(0)$}}
%\psfrag{O}[t]{{$0$}}
%\psfrag{P}[t]{{$\pi^{2}$}}
%\psfrag{b}[t]{{$b$}}
%\psfrag{c}[t]{{$ac$}}
%\psfrag{d}[t]{{$a/d$}}
%\psfrag{v}[t]{{$v(0)$}}
%\psfrag{S}[t]{{$\beta_{S}$}}
%\psfrag{E}[t]{{$\beta_{E}$}}
%\psfrag{T}[t]{{$T$}}
%
%
%\begin{figure}
%\begin{center}
%{\includegraphics*[scale=.6]{fig1}}\vspace{5mm}\\
%\caption{Expected bifurcation diagram of solutions of \eqref{1DSKT} with 
%$v'<0$ and large $\alpha$
%(in case $\pi^{2}<a/d<ac$). Thick curve indicates the projection of the set of %solutions
%on the $(v(0), b)$-plane.}
%\end{center}\end{figure}\vspace{-2mm}

\end{document}